\documentclass[twoside, 12pt]{article}
\usepackage{amsmath,amssymb,amsthm,mathrsfs}
\usepackage[margin=2.5cm]{geometry} 
\usepackage{lipsum}
\usepackage{titlesec,hyperref}
\usepackage{fancyhdr}
\usepackage[numbers,sort&compress]{natbib}
\usepackage{color}

\pagestyle{fancy}
\fancyhf{}
\fancyhead[CO]{\footnotesize\it Asymptotic Behavior of Solution to the Incompressible Nematic Liquid Crystal Flows in $\mathbb{R}^3$}
\fancyhead[CE]{\footnotesize\it J.C.Gao, Q.Tao, Z.A.Yao}
\fancyfoot[CE,CO]{\footnotesize\rm\thepage}
\fancypagestyle{plain}
{
\fancyhf{}

}

\linespread{1.1}

\titleformat{\subsection}{\it}{\thesubsection.\enspace}{1.5pt}{}
\titleformat{\subsubsection}{\it}{\thesubsubsection.\enspace}{1.5pt}{}

\newtheorem{theo}{Theorem}[section]
\newtheorem{lemm}[theo]{Lemma}

\newtheorem{rema}{Remark}[section]
\numberwithin{equation}{section}

\allowdisplaybreaks

\def\th2{\frac{\theta}{2}}

\begin{document}
\title{ Asymptotic Behavior of Solution to the Incompressible Nematic Liquid Crystal Flows in $\mathbb{R}^3$  \hspace{-4mm}}
\author{Jincheng Gao$^\dag$  \quad Qiang Tao $^\ddag$ \quad Zheng-an Yao $^\dag$\\[10pt]
\small {$^\dag $School of Mathematics and Computational Science, Sun Yat-Sen University,}\\
\small {510275, Guangzhou, P. R. China}\\[5pt]
\small {$^\ddag$ College of Mathematics and Computational Science, Shenzhen University,}\\
\small {518060, Shenzhen, P. R. China}\\[5pt]
}

\footnotetext{Email: \it gaojc1998@163.com(J.C.Gao), taoq060@126.com(Q.Tao), \it mcsyao@mail.sysu.edu.cn(Z.A.Yao).}
\date{}
\maketitle

\begin{abstract}
In this paper, we investigate the Cauchy problem for the incompressible nematic liquid crystal flows in three-dimensional whole space. First of all, we establish the global existence of solution by energy method under assumption of small initial data. Furthermore, the time decay rates of velocity and director are built when the initial data belongs to $L^1(\mathbb{R}^3)$ additionally. Finally, one also constructs the time convergence rates for the mixed space-time derivatives of velocity and director.

\vspace*{5pt}
\noindent{\it {\rm Keywords}}: incompressible nematic liquid crystal flows; global solution; asymptotic behavior;
                         Fourier splitting method
\vspace*{5pt}

\noindent{\it {\rm 2010 Mathematics Subject Classification:}}\ {\rm 35Q35, 35B40, 76A15}
\end{abstract}

\section{Introduction}
\quad In this paper, we investigate the motion of incompressible nematic liquid crystal flows,
which are governed by the following simplified version of the Ericksen-Leslie equations
\begin{equation}\label{1.1}
\left\{
\begin{aligned}
& u_t+ u \cdot \nabla u-\mu \Delta u+\nabla P=- {\rm div}(\nabla d \odot \nabla d),\\
& {\rm div}u=0,\\
&d_t+u\cdot \nabla d=\Delta d+|\nabla d|^2 d,
\end{aligned}
\right.
\end{equation}
where $u, p$ and $d$ stand for the velocity, pressure and macroscopic average of the nematic
liquid crystal orientation field respectively. The positive constant $\mu$
represents shear viscosity coefficient of the fluid. For the sake of simplicity, we normalize the constant $\mu$
to be $1$. The term $\nabla d \odot \nabla d$ in the equations of conservation of momentum denotes
the $3\times 3$ matrix whose $ij-$th entry is given by $\partial_i d \cdot \partial_j d \ (1 \le i, j \le 3)$.
To complete the system \eqref{1.1}, the initial data is given by
\begin{equation}\label{1.2}
\left.(u,d)(x,t)\right|_{t=0}=(u_0(x), d_0(x)).
\end{equation}
The initial data is also supposed to satisfy ${\rm div}u_0(x)=0$ and $|d_0(x)|=1.$
As the space variable tends to infinity, we suppose
\begin{equation}\label{1.3}
\underset{|x|\rightarrow \infty}{\lim}(u_0,d_0-w_0)(x)=0,
\end{equation}
where $w_0$ is a fixed constant unit vector.
The system \eqref{1.1} is a coupling between the incompressible homogeneous Navier-Stokes equations and a transported
heat flow of harmonic maps into $S^2$. It is a macroscopic continuum description of the evolution for
the liquid crystals of nematic type under the influence of both the flow field $u$ and the macroscopic
description of the microscopic configuration $d$ of rod-like liquid crystals. Generally speaking,
the system \eqref{1.1} can not be obtained any better results than Navier-Stokes equations.

The hydrodynamic theory of liquid crystals in the nematic case has been established by Ericksen \cite{Ericksen}
and Leslie \cite{Leslie} during the period of $1958$ through $1968$.
Since then, the mathematical theory is still progressing and the study of the full Ericksen-Leslie model
presents relevant mathematical difficulties. The pioneering work comes from Lin and his partners
\cite{{Hard-Kinderlehrer-Lin}, {Lin},{Lin-Liu1}, {Lin-Liu2}}.
For example, Lin and Liu \cite{Lin-Liu1} obtained
the global weak and smooth solutions for the Ginzburg-Landau approximation to relax the nonlinear constraint
$d \in S^2.$ They also discussed the uniqueness and some stability properties of the system.
Later, a series of papers concerning about the time decay rates for this approximated system in a bounded domain
were given by Wu \cite{Wuhao}, Hu and Wu \cite{Hu-Wu2}, Grasselli and Wu \cite{Grasselli-Wu} respectively.
On the other hand,  Dai, Qing and Schonbek \cite{{Dai1}}, Dai and Schonbek \cite{{Dai2}} established the time decay rates
for the Cauchy problem respectively. More precisely, Dai and Schonbek \cite{{Dai2}} obtained the global existence of solution
in the Sobolev space $H^N(\mathbb{R}^3)\times H^{N+1}(\mathbb{R}^3)(N \ge 1)$ only requiring the smallness of $\|u_0\|_{H^1}^2+\|d_0-w_0\|_{H^2}^2$, where $w_0$ is an unit constant vector.
They also established the time decay rates
\begin{equation*}
\|\nabla^k u(t)\|_{L^2}+\|\nabla^k (d-w_0)(t)\|_{L^2} \le C(1+t)^{-\frac{3+2k}{4}},
\end{equation*}
for $k=0,1,2...,N$. Recently, Hu and Wang \cite{Hu-Wang} obtained the existence and uniqueness
of the global strong solution with small initial data in a three-dimensional bounded domain with smooth boundary.
They also proved the global weak solutions constructed in \cite{Lin-Liu1} must be equal to the unique strong
solution if the later exists. For the density dependent fluid, Liu and Zhang \cite{Liu-Zhang}
obtained the global weak solutions in dimension three with the initial density $\rho_0 \in L^2$, which was improved
by Jiang and Tan \cite{Jiang-Tan} for the case $\rho_0 \in L^{\gamma}(\gamma > \frac{3}{2})$.

Under the constraint $d\!\in S^2$, for the case of homogeneous fluid, Li and Wang \cite{Li-Wang1} established
the local strong solution with large initial data and the global one with small data.
This result was improved to the case of inhomogeneous fluid in \cite{Li-Wang2}. For the general large initial
data, Huang and Wang \cite{Huang-Wang} established blow up criterion for the short time classical solution of the
nematic liquid crystal flows for the dimension two and three respectively.
Later, Hong \cite{Hong} and Lin, Lin, and Wang \cite{Lin-Lin-Wang} showed independently the global existence of a weak solution in two-dimensional space. Later, Xu and Zhang  \cite{Xu-Zhang} proved the global existence and regularity of weak solutions
for the liquid crystal flows with large initial velocity in two-dimensional whole space.
Recently, Lin and Wang \cite{Lin-Wang} established the global existence of a weak solution for the initial-boundary value or the Cauchy problem by restricting the initial director field on the unit upper hemisphere.
On the other hand, Wang \cite{Wang}
established global well-posedness theory for rough initial data provided that $\|u_0\|_{{\rm BMO}^{-1}}+[d_0]_{BMO} \le \varepsilon_0$ for some $\varepsilon_0 >0.$
The local well-posedness for initial data $(u_0, d_0)$ with small $L^3_{uloc}(\mathbb{R}^3)$-norm has been proved by Hineman and Wang \cite{Hineman-Wang}.
Under smallness of $\|u_0\|_{{\rm BMO}^{-1}}+[d_0]_{BMO}$, Du and Wang \cite{Du-Wang1} obtained arbitrary space-time regularity for the Koch and Tataru type solution $(u, d)$. As a corollary, they also got the decay rates.
For the case of nonhomogeneous fluid, Wen and Ding \cite{Wen-Ding} established the local existence for the
strong solution and obtained the global solution under the assumptions of small energy and positive initial density.
Later, Li \cite{Li1} improved this result to the case of vacuum. At the same time, Li \cite{Li2} established
the local existence and uniqueness of strong solution and extended such local strong solution to be global,
provided the initial density is away from vacuum and the initial direction field satisfies some geometric structure.
For more results, the readers can refer to \cite{Lin-Wang1} that have introduced
some recent developments of analysis for hydrodynamic flow of nematic liquid crystal
flows and references therein.
As for the compressible nematic liquid crystal flows, the readers can refer to
\cite{{Ding-Lin-Wang-Wen},{Ding-Wang-Wen},{Huang-Wang-Wen1},{Huang-Wang-Wen2},{Huang-Wang1},{Ding-Huang-Xia},
{Yang-Dou-Ju},{Jiang-Jiang-Wang},{Ding-Huang-Wen-Zi},{Hu-Wu},{Ma},{Gao-Tao-Yao},{Lin-Lai-Wang}}
and references therein.

Motivated by the work of Dai and Schonbek \cite{{Dai2}}, we are interested in establishing the global solution under assumption of small initial data and studying the asymptotic behavior of solution for the systems \eqref{1.1}-\eqref{1.3}
in this paper. First of all, we construct global solution with the help of the method developed by Guo and Wang \cite{Guo-Wang} only requiring the smallness of $\|u_0\|_{H^1}^2+\|d_0-w_0\|_{H^2}^2$.
Secondly, one obtains the time decay rates for the direction field by the method in \cite{K-N-N}
and the velocity by the Fourier splitting method in \cite{Dai2}. The difficulty here is to deal with the super
nonlinear term $|\nabla d|^2 d$. Furthermore, we establish the time convergence
rates for the higher-order derivatives of velocity and director by taking the strategy of induction guaranteed
by the Fourier splitting method. Motivated by the Lemma \ref{motivated}, we enhance the time decay rates
for the higher-order derivatives of direction field. Finally, the time decay rates for mixed space-time
derivatives of velocity and director are also built.

\textbf{Notation:} In this paper, we use $H^s(\mathbb{R}^3)( s\in \mathbb{R})$ to denote the usual Sobolev space
with norm $\|\cdot\|_{H^s}$ and $L^p(\mathbb{R}^3)(1\le p \le \infty)$ to denote the usual $L^p$ space with norm
$\| \cdot \|_{L^p}$. The symbol $\nabla^l $ with an integer $l \ge 0$ stands for the usual any spatial derivatives
of order $l$. When $l$ is not an integer, $\nabla^l $ stands for $\Lambda^l$ defined by
$\Lambda^l f :=\mathcal{F}^{-1}(|\xi|^l \mathcal{F}f)$, where $\mathcal{F}$ is the usual Fourier transform operator
and $\mathcal{F}^{-1}$ is its inverse.
We will employ the notation $a \lesssim b$ to mean that $a \le C b$ for an universal constant $C>0$ independent of
the time $t$. For the sake of simplicity, we write  $\int f dx =\int _{\mathbb{R}^3} f dx.$

Now, we state our first result concerning the global existence of solution for the incompressible nematic liquid
crystal flows \eqref{1.1}-\eqref{1.3}.

\begin{theo}\label{global-existence}
Assume $(u_0, d_0-w_0) \in H^N \times H^{N+1}$ for any integer $N \ge 1$. If there
exists a constant $\delta_0 >0$ such that
\begin{equation}\label{1.4}
\|u_0\|_{H^1}^2+\|d_0-w_0\|_{H^2}^2 \le \delta_0,
\end{equation}
then the problem \eqref{1.1}-\eqref{1.3} admits a unique global solution $(u, p,d)$ satisfying for all $t \ge 0$,
\begin{equation}\label{1.5}
\|u\|_{H^N}^2+\|d-w_0\|_{H^{N+1}}^2+
 \int_0^t (\|\nabla u\|_{H^N}^2+\|\nabla (d-w_0)\|_{H^{N+1}}^2) d\tau
 \le \|u_0\|_{H^N}^2+\|d_0-w_0\|_{H^{N+1}}^2.
\end{equation}
\end{theo}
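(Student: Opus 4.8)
The plan is to combine a standard local-existence result with uniform-in-time a priori energy estimates, close them by a continuity (bootstrap) argument, and then continue the local solution to a global one. Writing $n = d - w_0$, the constraint $|d| = 1$ gives $|\nabla d|^2 = |\nabla n|^2$ and $\nabla^k d = \nabla^k n$ for $k \ge 1$, and the director equation becomes $n_t + u\cdot\nabla n = \Delta n + |\nabla n|^2(n+w_0)$. I introduce the energy and dissipation functionals
\[
\mathcal{E}(t) = \|u\|_{H^N}^2 + \|n\|_{H^{N+1}}^2, \qquad \mathcal{D}(t) = \|\nabla u\|_{H^N}^2 + \|\nabla n\|_{H^{N+1}}^2,
\]
and aim to establish the differential inequality $\frac{d}{dt}\mathcal{E} + \mathcal{D} \lesssim \sqrt{\mathcal{E}}\,\mathcal{D}$ for smooth solutions. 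Local existence and uniqueness on a maximal interval $[0,T^\ast)$ follows by a standard iteration/Galerkin scheme for this parabolic--hyperbolic coupled system, so the crux is the a priori estimate; the continuation criterion then forces $T^\ast = \infty$ once $\mathcal{E}$ is shown to stay small.

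For the a priori estimate I would perform $L^2$ energy estimates at every derivative level. For $0 \le k \le N$, apply $\nabla^k$ to the momentum equation and pair with $\nabla^k u$: incompressibility eliminates the pressure and the leading part of $u\cdot\nabla u$, while the viscous term supplies the dissipation $\|\nabla^{k+1}u\|_{L^2}^2$. For $0 \le k \le N+1$, apply $\nabla^k$ to the director equation and pair with $\nabla^k n$: the Laplacian supplies $\|\nabla^{k+1}n\|_{L^2}^2$, and summing over $k$ reproduces exactly $\|\nabla u\|_{H^N}^2 + \|\nabla n\|_{H^{N+1}}^2 = \mathcal{D}$. The transport terms are handled by integration by parts using $\mathrm{div}\,u = 0$: at top order the contributions $\int u\cdot\nabla(\nabla^k u)\cdot\nabla^k u$ and $\int u\cdot\nabla(\nabla^k n)\cdot\nabla^k n$ vanish, leaving only commutator remainders $[\nabla^k, u\cdot\nabla]$ that are genuinely of lower order.

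All remaining terms are cubic in the solution and must be dominated by $\sqrt{\mathcal{E}}\,\mathcal{D}$. The velocity--director coupling $\int \nabla^k\mathrm{div}(\nabla d\odot\nabla d)\cdot\nabla^k u$ and the transport commutators are controlled by Leibniz / Kato--Ponce product estimates together with the Gagliardo--Nirenberg inequalities and the embeddings $H^2(\mathbb{R}^3)\hookrightarrow L^\infty$ and $H^1(\mathbb{R}^3)\hookrightarrow L^4\cap L^6$; each such term is arranged to carry at least one factor of the form $\|\nabla n\|_{L^\infty}$, $\|\nabla u\|_{L^2}$ or $\|\nabla n\|_{L^4}$ that is $O(\sqrt{\mathcal{E}})$, the remaining factors lying in $\mathcal{D}$. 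The main obstacle is the super-nonlinear cubic term $|\nabla n|^2(n+w_0)$: after applying $\nabla^{N+1}$, its worst contribution is of the form $\int \nabla^{N+2}n\cdot\nabla n\,(n+w_0)\cdot\nabla^{N+1}n$, where the factor $n+w_0$ is only bounded, not small. The resolution is to place one derivative factor in $L^\infty$ (yielding the small factor $\|\nabla n\|_{L^\infty}\lesssim\sqrt{\mathcal{E}}$) while keeping the two highest derivatives $\|\nabla^{N+2}n\|_{L^2},\|\nabla^{N+1}n\|_{L^2}$ both inside $\mathcal{D}$; at the lowest order the unit-length constraint, through the identity $d\cdot n = -\,w_0\cdot n = O(|n|)$, supplies the needed smallness. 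Summing all levels yields $\frac{d}{dt}\mathcal{E} + \mathcal{D}\lesssim\sqrt{\mathcal{E}}\,\mathcal{D}$.

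To close, I run the continuity argument. Assuming a priori that $\sup_{[0,T]}\mathcal{E}\le\delta$ with $\delta$ chosen so that $C\sqrt{\delta}\le\frac12$, the inequality becomes $\frac{d}{dt}\mathcal{E}+\frac12\mathcal{D}\le 0$, whence $\mathcal{E}(t)+\frac12\int_0^t\mathcal{D}\,d\tau\le\mathcal{E}(0)\lesssim\delta_0$. For $\delta_0$ sufficiently small this both recovers the bootstrap hypothesis (so $T^\ast=\infty$) and, after tracking constants, yields the stated bound \eqref{1.5}. Uniqueness follows from an $L^2$ energy estimate on the difference of two solutions, again absorbing the coupling terms by smallness. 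I expect the only genuinely delicate step to be the treatment of the cubic director nonlinearity at the top derivative level $N+1$, where there is the least room for interpolation and where the non-smallness of $n+w_0$ must be compensated by placing a derivative factor in $L^\infty$.
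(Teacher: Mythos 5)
There is a genuine gap in how you close the estimates. Your master inequality $\frac{d}{dt}\mathcal{E}+\mathcal{D}\lesssim\sqrt{\mathcal{E}}\,\mathcal{D}$, with $\mathcal{E}=\|u\|_{H^N}^2+\|n\|_{H^{N+1}}^2$, can only be closed by a bootstrap that assumes $\sup_{[0,T]}\mathcal{E}\le\delta$, and in particular needs $\mathcal{E}(0)$ small. But the hypothesis \eqref{1.4} of the theorem makes only $\|u_0\|_{H^1}^2+\|d_0-w_0\|_{H^2}^2$ small; the data may be arbitrarily large in $H^N\times H^{N+1}$, so your bootstrap cannot start, and your argument proves a strictly weaker statement (global existence under top-order smallness). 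The paper's proof, following Guo and Wang \cite{Guo-Wang}, is built precisely to avoid this: the a priori assumption \eqref{smallness} is imposed only at the $H^1\times H^2$ level, and in every Leibnitz term at derivative level $k$ the interpolation inequality \eqref{GN} is applied so that the small prefactor is always a low-order quantity $\|\nabla^\alpha u\|_{L^2}$ with $\alpha\in[0,1]$ or $\|\nabla^\beta n\|_{L^2}$ with $\beta\in[0,2]$, while the top-order factors $\|\nabla^{k+1}u\|_{L^2}$ and $\|\nabla^{k+2}n\|_{L^2}$ appear with exponents summing to exactly two (see \eqref{2310}, \eqref{2312}, \eqref{2315}--\eqref{2317}, and the four-way splitting \eqref{2320} of the cubic director term). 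This yields the sourceless level-$k$ inequality \eqref{inequality2} \emph{independently of the size of the high norms}; the continuity argument is then run only at the $m=1$ level of \eqref{2330} to close \eqref{smallness}, after which taking $m=N$ gives \eqref{1.5}. To repair your proof you must redo all your product estimates with this interpolation pattern, rather than taking the small factor from $\mathcal{E}$.

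Two secondary points. First, your key device of placing a derivative factor in $L^\infty$ via $\|\nabla n\|_{L^\infty}\lesssim\sqrt{\mathcal{E}}$ fails at $N=1$: there $\nabla n\in H^1(\mathbb{R}^3)$, which does not embed into $L^\infty$; the paper instead uses $L^3$--$L^6$ splittings such as $\|\nabla n\|_{L^3}\|\nabla n\|_{L^6}\lesssim\|\nabla n\|_{H^1}\|\nabla^2 n\|_{L^2}$, where the small factor $\|\nabla n\|_{H^1}\lesssim\sqrt{\delta}$ is controlled by \eqref{smallness} alone. Second, your conclusion carries a factor $\frac12$ in front of $\int_0^t\mathcal{D}\,d\tau$ and an implicit constant from $\mathcal{E}(0)\lesssim\delta_0$, whereas \eqref{1.5} is stated with constant one on both sides; this is cosmetic next to the main issue, but it is worth noting that the clean form comes from the energies at different levels summing exactly (not merely up to norm equivalence) and from all absorbed terms carrying the factor $\delta$.
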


After having the unique global solution at hand, we concentrate on discussing the long time behavior of
velocity and direction field. Hence, we establish the decay rates as follow.

\begin{theo}\label{rates1}
Let all the assumptions in Theorem \ref{global-existence} hold on. Suppose additionally $u_0\in L^1(\mathbb{R}^3)$
and $d_0-w_0 \in L^1(\mathbb{R}^3)$, then the global solution $(u, p, d)$ of problem \eqref{1.1}-\eqref{1.3} has
the time decay rates for all $t\ge 0$,
\begin{equation}\label{1.6}
\begin{aligned}
&\|\nabla^k u(t)\|_{L^2} \le C(1+t)^{-\frac{3+2k}{4}},\\
&\|\nabla^m (d-w_0)(t)\|_{L^2} \le C(1+t)^{-\frac{3+2m}{4}},\\
\end{aligned}
\end{equation}
where $k=0, 1, 2,...,N$ and $m=0,1,2,...,N+1$.
\end{theo}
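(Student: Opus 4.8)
The plan is to combine energy estimates with the Fourier splitting method of Schonbek, applied here to the coupled nematic system. Let me write everything in terms of the perturbation, setting $n := d - w_0$ so that $n$ satisfies $n_t + u\cdot\nabla n = \Delta n + |\nabla n|^2 (n+w_0)$, which decays to zero, while $u$ solves the forced Navier--Stokes equation. The global bounds from Theorem \ref{global-existence}, together with the smallness \eqref{1.4}, guarantee that all the nonlinear terms are controlled by the (small) energy, so the strategy is to bootstrap: first prove the lowest-order decay $\|u\|_{L^2}+\|n\|_{L^2}\lesssim (1+t)^{-3/4}$, then climb the derivative ladder.

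First I would set up the basic energy inequality. Differentiating the standard energy functional and using ${\rm div}\,u=0$ to kill the pressure and the transport terms, one obtains
\begin{equation*}
\frac{d}{dt}\bigl(\|\nabla^k u\|_{L^2}^2+\|\nabla^k n\|_{L^2}^2\bigr)
+\|\nabla^{k+1}u\|_{L^2}^2+\|\nabla^{k+1}n\|_{L^2}^2 \lesssim (\text{nonlinear}),
\end{equation*}
where the nonlinear remainder is handled by Sobolev interpolation and the uniform smallness of the energy, so that after absorption it is dominated by lower-order dissipation. Next I would apply the Fourier splitting method: introduce the time-dependent ball $S(t)=\{\xi : |\xi|^2 \le g'(t)/g(t)\}$ with $g(t)=(1+t)^{\sigma}$, multiply the $L^2$ energy law by $g(t)$, and bound the dissipation from below by $g(t)\int_{S(t)^c}|\xi|^2|\hat{u}|^2\,d\xi$ after subtracting the low-frequency piece. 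The low-frequency integral $\int_{S(t)}(|\hat u|^2+|\hat n|^2)\,d\xi$ is then controlled using the $L^1$ hypothesis on $u_0$ and $d_0-w_0$, which gives $|\hat u(t,\xi)|, |\hat n(t,\xi)| \lesssim |\xi|\cdot(\text{bounded})$ plus a Duhamel contribution from the nonlinearity; on $S(t)$ this yields a factor $(1+t)^{-5/2}\,|S(t)|$, producing the target algebraic rate. Iterating on $\sigma$ (Schonbek's inductive improvement of the exponent) pushes the base decay up to $(1+t)^{-3/4}$ for $k=m=0$.

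For the higher-order rates I would argue by induction on the derivative order, exactly as the introduction promises via "the strategy of induction guaranteed by the Fourier splitting method." Assuming the decay of all derivatives up to order $k-1$ (resp. $m-1$), the Fourier splitting applied to the order-$k$ energy law, together with Duhamel estimates on the low-frequency part that now benefit from the already-established lower-order rates in the nonlinear terms, upgrades the exponent to $-\frac{3+2k}{4}$. The director field is handled by the same scheme, but note the range goes up to $m=N+1$ because $n$ enjoys one extra degree of regularity in the function space $H^{N+1}$.

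The main obstacle I expect is the supercritical term $|\nabla n|^2(n+w_0)$ in the director equation. Unlike the quadratic Navier--Stokes nonlinearity, this is effectively cubic in $\nabla n$, so its Duhamel/low-frequency contribution does not obviously decay fast enough by naive counting. The key will be to exploit that $\nabla n$ already carries extra decay (gained from the higher regularity $H^{N+1}$ and the one-order-better rate in \eqref{1.6}), so that $\| |\nabla n|^2 \|_{L^1}\lesssim \|\nabla n\|_{L^2}^2$ decays at rate $(1+t)^{-5/2}$, which is precisely what is needed to close the low-frequency estimate. Controlling this term uniformly while keeping the smallness of the energy intact — so that the nonlinear absorption in the energy inequality remains valid throughout the induction — is the delicate point, and it is where the enhanced director decay flagged in the introduction does the real work.
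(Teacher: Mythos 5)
There is a genuine gap at the heart of your induction: you run the Fourier splitting on the \emph{coupled} order-$k$ energy law (the analogue of \eqref{inequality2}), in which $\nabla^{k+1}n$ is paired with $\nabla^{k}u$. That pairing forces both norms to decay at the same rate, so the induction can only deliver $\|\nabla^{k}u\|_{L^2}^2+\|\nabla^{k+1}n\|_{L^2}^2\le C(1+t)^{-\frac{3}{2}-k}$, i.e. $\|\nabla^{m}n\|_{L^2}\le C(1+t)^{-\frac{1+2m}{4}}$ for $m=k+1$ --- half a power of $(1+t)^{-1/2}$ short of the claimed $-\frac{3+2m}{4}$ at \emph{every} order $m\ge 1$, not just at the top. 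Moreover, at the top order $m=N+1$ there is no order-$(N+1)$ velocity estimate to couple with at all, since $u$ only lies in $H^{N}$; your explanation that the range extends to $m=N+1$ ``because $n$ enjoys one extra degree of regularity'' does not produce an argument. The paper closes exactly this gap with a separate, decoupled step (Lemma \ref{motivated} and Lemma \ref{director-rates}): treat the director equation alone as a heat equation with source $F=-u\cdot\nabla n+|\nabla n|^{2}(n+w_0)$, verify $\|\nabla^{l}F\|_{L^2}^2\lesssim (1+t)^{-4-l}$ using the suboptimal coupled rates already established, and then apply the Fourier splitting to the director energy law by itself to upgrade to $\|\nabla^{l+1}n\|_{L^2}^2\lesssim (1+t)^{-\frac{5}{2}-l}$. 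This decoupling idea is the novel point of the theorem (cf.\ Remark 1.3) and is absent from your outline, so as written your scheme proves the velocity half of \eqref{1.6} but not the director half.

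Two further remarks. First, at the base level $k=m=0$ your route (Duhamel plus Fourier splitting for \emph{both} $u$ and $n$, low frequencies controlled by the $L^1$ data) is a viable alternative in the Dai--Schonbek spirit, whereas the paper obtains the director decay by the $L^p$ energy method of Kawashima et al.: it propagates $\|n\|_{L^1}$ in time (multiplier $n/|n|$, with the dissipation integral bounded by \eqref{1.5}), derives $\frac{d}{dt}\|n\|_{L^p}^p+C_p\|\nabla|n|^{p/2}\|_{L^2}^2\le 0$, and concludes $\|n\|_{L^p}\le C(1+t)^{-\frac{3}{2}(1-\frac{1}{p})}$ for all $p\ge 2$ by a time-weighted argument; only the velocity is handled by Fourier splitting. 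Second, your claimed low-frequency bound $|\hat{n}(t,\xi)|\lesssim|\xi|\cdot(\text{bounded})$ is wrong for the cubic term, which carries no divergence structure and hence no factor of $|\xi|$; what saves the Duhamel estimate is plain boundedness, $\int_0^t\||\nabla n|^2\|_{L^1}\,ds\le\int_0^t\|\nabla n\|_{L^2}^2\,ds\lesssim 1$ from \eqref{1.5}. You correctly identify that $\|\nabla n\|_{L^2}^2\lesssim(1+t)^{-5/2}$ is the decisive input later on, but the bootstrap must be ordered so that this rate (the paper's Lemma \ref{director-decay2}, proved by splitting with the $\|n\|_{L^2}$ decay already in hand) is established before it is invoked.
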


\begin{rema}
In \cite{Schonbek-Wiegner}, Schonbek and Wiegner studied the large time behavior of solutions for the
incompressible Navier-Stokes equations under the $H^N-$framework and established following time decay rates
\begin{equation*}
\|\nabla^k u(t)\|_{L^2} \le C(1+t)^{-\frac{3+2k}{4}},
\end{equation*}
where $k=0, 1, 2,...,N$. Then, it is easy to see that our result \eqref{1.6}
coincides with the result in \cite{Schonbek-Wiegner} for the Navier-Stokes equations.
\end{rema}

\begin{rema}
For the case $N=1$, by virtue of \eqref{1.6} and the Sobolev interpolation inequality, we have
\begin{equation*}
\begin{aligned}
&\|u(t)\|_{L^p}\le C(1+t)^{-\frac{3}{2}\left(1-\frac{1}{p}\right)},\\
&\|(d-w_0)(t)\|_{L^q}\le C(1+t)^{-\frac{3}{2}\left(1-\frac{1}{q}\right)},\\
\end{aligned}
\end{equation*}
for $p\in [2, 6]$ and $q\in [2, \infty]$. Furthermore, if $N \ge 2$, then we obtain
\begin{equation*}
\begin{aligned}
&\|\nabla^k u(t)\|_{L^p}\le C(1+t)^{-\frac{3}{2}\left(1-\frac{1}{p}\right)-\frac{k}{2}},\\
&\|\nabla^m (d-w_0)(t)\|_{L^p}\le C(1+t)^{-\frac{3}{2}\left(1-\frac{1}{p}\right)-\frac{m}{2}},\\
\end{aligned}
\end{equation*}
for $2\le p\le \infty; k=0,1,...,N-2$ and $m=0,1,...,N-1$.
\end{rema}

\begin{rema}
Compared with the time decay rates  in \cite{Dai2}, the excellent feature  of  $\eqref{1.6}_2$ is to provide the time decay rates for the $(N+1)-$th order spatial derivatives of director as
\begin{equation*}
\|\nabla^{N+1} (d-w_0)(t)\|_{L^2} \le C(1+t)^{-\frac{5+2N}{4}},
\end{equation*}
which is motivated and guaranteed by the Lemma \ref{motivated}.
\end{rema}

Finally, we  are devoted to establishing the time convergence rates for the mixed space-time derivatives of
director and velocity.
\begin{theo}\label{rates2}
Let all the assumptions in Theorem \ref{rates1} hold on, then the global solution $(u, p, d)$ of the problem \eqref{1.1}-\eqref{1.3}
has the time decay rates for all $t \ge 0$,
\begin{equation}\label{1.7}
\|\nabla^k d_t(t)\|_{L^2} \le C(1+t)^{-\frac{7+2k}{4}},
\end{equation}
where $k=0,1,2,...,N-1$. Furthermore, if $N \ge 2$, then we also have
\begin{equation}\label{1.8}
\|\nabla^{k+1}P\|_{L^2}+\|\nabla^k u_t(t)\|_{L^2} \le C(1+t)^{-\frac{7+2k}{4}},
\end{equation}
where $k=0,1,2,...,N-2$.
\end{theo}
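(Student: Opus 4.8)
The plan is to convert each time derivative into spatial derivatives using the evolution equations in \eqref{1.1}, then read off the rate from the slowest-decaying (linear) term while checking that every nonlinear contribution decays strictly faster. The decay rates supplied by Theorem \ref{rates1} furnish all the needed spatial estimates, and the mismatch between the ranges of $k$ in \eqref{1.7} and \eqref{1.8} is dictated precisely by how many spatial derivatives of $u$ and $d-w_0$ carry decay. For the director, the third equation of \eqref{1.1} gives
\begin{equation*}
\nabla^k d_t = \nabla^{k+2}(d-w_0) - \nabla^k(u\cdot\nabla d) + \nabla^k(|\nabla d|^2 d).
\end{equation*}
Since $k \le N-1$ forces $k+2 \le N+1$, Theorem \ref{rates1} yields $\|\nabla^{k+2}(d-w_0)\|_{L^2} \lm (1+t)^{-\frac{7+2k}{4}}$, which is exactly the claimed rate. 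For the two nonlinear pieces I would distribute derivatives by the Leibniz rule, estimate each product by H\"older placing the highest-order factor in $L^2$, and bound the remaining factor in an $L^p$ controlled by Sobolev interpolation together with the rates of Theorem \ref{rates1} and the $L^p$ corollary in the Remarks; using $\|d\|_{L^\infty}\le 1$ for the super nonlinear term, every such product decays faster than $(1+t)^{-\frac{7+2k}{4}}$, so the linear term prevails.

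To reach the velocity time derivative I first control $\nabla P$. Applying $\p_j$ to the first equation of \eqref{1.1} and using ${\rm div}\,u=0$ yields the elliptic relation
\begin{equation*}
\Delta P = -{\rm div}(u\cdot\nabla u) - {\rm div}\,{\rm div}(\nabla d \odot \nabla d),
\end{equation*}
whence $\nabla P = -\nabla\Delta^{-1}{\rm div}(u\cdot\nabla u) - \nabla\Delta^{-1}{\rm div}\,{\rm div}(\nabla d\odot\nabla d)$. The operators $\nabla\Delta^{-1}{\rm div}$ and $\nabla\Delta^{-1}{\rm div}\,{\rm div}$ are compositions of Riesz transforms, hence bounded on $L^2$, so $\|\nabla^{k+1}P\|_{L^2} \lm \|\nabla^k(u\cdot\nabla u)\|_{L^2} + \|\nabla^k\,{\rm div}(\nabla d\odot\nabla d)\|_{L^2}$. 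Both terms are nonlinear and, by the same product/interpolation estimates, decay at least like $(1+t)^{-\frac{7+2k}{4}}$; crucially, bounding ${\rm div}(\nabla d\odot\nabla d)\sim\nabla^2 d\cdot\nabla d$ requires an $L^\infty$ bound on a first derivative, which is exactly why the hypothesis $N\ge 2$ enters \eqref{1.8}.

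With the pressure estimate at hand, the first equation of \eqref{1.1} gives
\begin{equation*}
\nabla^k u_t = \nabla^{k+2} u - \nabla^k(u\cdot\nabla u) - \nabla^{k+1}P - \nabla^k\,{\rm div}(\nabla d\odot\nabla d).
\end{equation*}
For $k \le N-2$ we have $k+2 \le N$, so the leading term $\nabla^{k+2}u$ decays like $(1+t)^{-\frac{7+2k}{4}}$ by Theorem \ref{rates1}; the pressure term was just controlled, and the remaining two nonlinear terms decay faster exactly as above. Combining these bounds yields \eqref{1.8}, and together with the first step the full statement follows.

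The main obstacle is the nonlinear bookkeeping, and in particular the super nonlinear term $|\nabla d|^2 d$: one must verify, uniformly in $k$, that after the Leibniz rule every resulting product decays strictly faster than $(1+t)^{-\frac{7+2k}{4}}$. The delicate point is matching the \emph{worst} split of derivatives (the factor carrying the most derivatives, hence the weakest decay) against the budget of rates from Theorem \ref{rates1}, and recognizing that the velocity estimate costs two extra derivatives of regularity—forcing $N\ge 2$—because the stress term ${\rm div}(\nabla d\odot\nabla d)$ demands an $L^\infty$ bound on $\nabla d$.
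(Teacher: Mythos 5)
Your proposal is correct, and its first half (the director estimate \eqref{1.7}) is in substance identical to the paper's Lemma \ref{director-mixed}: the paper tests $\nabla^k$ of $\eqref{equations2}_3$ against $\nabla^k n_t$ and absorbs $\varepsilon\|\nabla^k n_t\|_{L^2}^2$, which is just a weighted form of your triangle-inequality reading $\|\nabla^k n_t\|_{L^2}\le\|\nabla^k(\Delta n - u\cdot\nabla n + |\nabla n|^2(n+w_0))\|_{L^2}$, with the linear term $\nabla^{k+2}n$ dominating exactly as you say.

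For the second half you take a genuinely different route from the paper's Lemma \ref{velocity-mixed}, and the difference is the order in which $P$ and $u_t$ are controlled. You solve the pressure first from $\Delta P=-{\rm div}(u\cdot\nabla u)-{\rm div}\,{\rm div}(\nabla d\odot\nabla d)$ via Riesz transforms (bounded on $L^2$, trivially by Plancherel), and only then bound $\nabla^k u_t$ by the triangle inequality. The paper never needs the pressure to bound $u_t$: testing the momentum equation against $\nabla^k u_t$, the pressure term vanishes identically, $\int\nabla^{k+1}P\cdot\nabla^k u_t\,dx=0$, because $u_t$ inherits ${\rm div}\,u_t=0$ (this is \eqref{3105}); the decay of $\|\nabla^k u_t\|_{L^2}$ comes out first, and $\|\nabla^{k+1}P\|_{L^2}$ (together with $\|\nabla^{k+2}u\|_{L^2}$) is then recovered from the classical regularity theory of the stationary Stokes system, \eqref{3107}, applied to $-\Delta u+\nabla P=-u_t-u\cdot\nabla u-\nabla n\,\Delta n$. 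Your route buys independence from the Stokes regularity lemma at the cost of invoking Calder\'on--Zygmund/Riesz boundedness (harmless on $L^2$); the paper's route buys a pressure-free energy identity via the orthogonality of gradients and divergence-free fields, which is the more robust trick when one cannot invert $\Delta$ cleanly (e.g.\ in domains). The nonlinear bookkeeping is the same in both: the paper's estimates \eqref{3102}--\eqref{3104} are exactly your Leibniz--H\"older--interpolation splits, and your worst-case terms all decay strictly faster than $(1+t)^{-\frac{7+2k}{4}}$, so the linear terms $\nabla^{k+2}(d-w_0)$ and $\nabla^{k+2}u$ set the rates and the ranges $k\le N-1$, $k\le N-2$. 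One small correction of emphasis: the binding reason for $N\ge 2$ in \eqref{1.8} is the linear term $\nabla^{k+2}u$, which already at $k=0$ requires the decay of $\|\nabla^2 u\|_{L^2}$ from Theorem \ref{rates1}; the $L^\infty$ bound on $\nabla d$ you cite for the stress term can be circumvented by the split $\|\nabla d\|_{L^3}\|\nabla^{k+2}d\|_{L^6}\lesssim\|\nabla d\|_{L^3}\|\nabla^{k+3}d\|_{L^2}$, which happens to impose the same restriction $k+3\le N+1$, so your stated range is right either way.
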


The paper is organized as follows. In section $2$, we establish the global existence of solution
under the assumption of small initial data by energy method.
In section $3$, one obtains the decay rate for the direction field firstly. Then, the application of
Fourier splitting method helps us get the decay rate for the velocity.
Furthermore, we establish the time decay rates for high-order spatial derivatives of velocity and direction field.
Finally, we also study the convergence rates for the pressure and mixed space-time derivatives
of velocity and director.

\section{Global existence of solution}
\quad In this section, we construct the global existence of solution by energy method under the assumption
of small initial data. Our first section concentrates on establishing some differential inequalities
by assuming that the quantity $\|u(t)\|_{H^1}+\|d(t)-w_0\|_{H^2}$ is small. Then, we close the estimates
by the standard continuity argument and the global existence of solution follows immediately
in subsection $2.2$.

Denoting $n=d-w_0$, then the system \eqref{1.1} becomes
\begin{equation}\label{equations2}
\left\{
\begin{aligned}
& u_t+ u \cdot \nabla u- \Delta u+\nabla P=- {\rm div}(\nabla n \odot \nabla n),\\
& {\rm div}u=0,\\
&n_t+u\cdot \nabla n=\Delta n+|\nabla n|^2 (n+w_0),
\end{aligned}
\right.
\end{equation}
and the initial data is given by
\begin{equation}\label{initial2}
\left.(u,n)(x,t)\right|_{t=0}=(u_0(x), n_0(x)).
\end{equation}
When the space variable tends to infinity, we have
\begin{equation}\label{infinity-decay2}
\underset{|x|\rightarrow \infty}{\lim}(u_0,n_0)(x)=0.
\end{equation}
\begin{rema}
Since the initial data $|d_0(x)|=1$, then it follows immediately $|d(x,t)|=1$ by virtue of the maximum principle.
Hence, we have
\begin{equation}\label{equal1}
|(n+w_0)(x,t)|=|d(x,t)|=1,
\end{equation}
which will be used frequently in the following section.
\end{rema}

\subsection{Energy estimates}
\quad First of all, we recall the classical interpolation inequality that will be used frequently in our context.
\begin{lemm}
{\rm (\cite{Tan-Wang})}Let $2 \le p \le \infty$ and $0 \le m, \alpha \le l$ ; when $p=\infty$ we require further that $m \le \alpha+1$
and $l \ge \alpha+2$. Then we have that for any $f\in C_0^\infty(\mathbb{R}^3)$,
\begin{equation}\label{GN}
\|\nabla^\alpha f\|_{L^p} \lesssim \|\nabla^m f \|_{L^2}^{1-\theta} \|\nabla^l f \|_{L^2}^\theta,
\end{equation}
where $0\le \theta \le 1$ and $\alpha$ satisfy
\begin{equation*}
\alpha+3\left(\frac{1}{2}-\frac{1}{p}\right)=m(1-\theta)+l\theta.
\end{equation*}
\end{lemm}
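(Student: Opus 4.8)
The plan is to prove \eqref{GN} by frequency localization, which is natural because the right-hand side involves only $L^2$-based derivative norms. By Plancherel's theorem these are comparable to homogeneous Sobolev seminorms: for any real $s\ge 0$ one has $\|\nabla^s f\|_{L^2}\approx\||\xi|^s\widehat f\|_{L^2}$, so $\|\nabla^m f\|_{L^2}$ and $\|\nabla^l f\|_{L^2}$ are comparable to $\|f\|_{\dot H^m}$ and $\|f\|_{\dot H^l}$. Before localizing I would record the scaling heuristic that forces the exponent relation: applying \eqref{GN} to $f_\lambda(x)=f(\lambda x)$ multiplies the left-hand side by $\lambda^{\alpha-3/p}$ and the right-hand side by $\lambda^{(m-3/2)(1-\theta)+(l-3/2)\theta}$, and equating these two powers is exactly the stated identity $\alpha+3(\tfrac12-\tfrac1p)=m(1-\theta)+l\theta$. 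This motivates setting $s:=\alpha+3(\tfrac12-\tfrac1p)$, so that $s=m(1-\theta)+l\theta$ throughout.

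Next I would decompose $f=\sum_{j\in\mathbb Z}\dot\Delta_j f$ into homogeneous Littlewood--Paley pieces with frequency support in $|\xi|\sim 2^j$ and estimate one block at a time. Since $\dot\Delta_j f$ is localized at frequency $2^j$, Bernstein's inequality gives
\begin{equation*}
\|\nabla^\alpha \dot\Delta_j f\|_{L^p}\lesssim 2^{j\alpha}\,2^{3j(\frac12-\frac1p)}\|\dot\Delta_j f\|_{L^2}=2^{js}c_j,\qquad c_j:=\|\dot\Delta_j f\|_{L^2},
\end{equation*}
while Plancherel yields $\sum_j 2^{2jm}c_j^2\approx\|f\|_{\dot H^m}^2$ and $\sum_j 2^{2jl}c_j^2\approx\|f\|_{\dot H^l}^2$. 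The whole inequality is thereby reduced to a discrete interpolation of the sequence $(c_j)$.

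For the range $2\le p<\infty$ I would invoke the square-function characterization $\|\nabla^\alpha f\|_{L^p}\approx\bigl\|(\sum_j|\nabla^\alpha\dot\Delta_j f|^2)^{1/2}\bigr\|_{L^p}$, then apply Minkowski's inequality in $L^{p/2}$ (valid since $p/2\ge1$) together with the block bound above to obtain
\begin{equation*}
\|\nabla^\alpha f\|_{L^p}\lesssim\Bigl(\sum_j\|\nabla^\alpha\dot\Delta_j f\|_{L^p}^2\Bigr)^{1/2}\lesssim\Bigl(\sum_j 2^{2js}c_j^2\Bigr)^{1/2}.
\end{equation*}
Writing $2^{2js}c_j^2=(2^{2jm}c_j^2)^{1-\theta}(2^{2jl}c_j^2)^{\theta}$ via $s=m(1-\theta)+l\theta$ and applying Hölder's inequality in $j$ with conjugate exponents $\tfrac1{1-\theta}$ and $\tfrac1\theta$ gives $\sum_j 2^{2js}c_j^2\le(\sum_j 2^{2jm}c_j^2)^{1-\theta}(\sum_j 2^{2jl}c_j^2)^{\theta}$, which equals $\|f\|_{\dot H^m}^{2(1-\theta)}\|f\|_{\dot H^l}^{2\theta}$; taking square roots settles this range.

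The endpoint $p=\infty$ is the step I expect to be the main obstacle, since the square-function identity fails there and one must instead sum the blocks in $\ell^1$. Here $s=\alpha+\tfrac32$, and using $\|\nabla^\alpha\dot\Delta_j f\|_{L^\infty}\lesssim 2^{js}c_j$ I would split $\sum_j 2^{js}c_j$ at a frequency $2^{j_0}$, bounding $2^{js}c_j\le 2^{j(s-m)}(2^{jm}c_j)$ for $j\le j_0$ and $2^{js}c_j\le 2^{j(s-l)}(2^{jl}c_j)$ for $j>j_0$. Geometric summability of the two tails requires $s-m>0$ and $s-l<0$, i.e. $m<\alpha+\tfrac32<l$; these are exactly guaranteed by the extra hypotheses $m\le\alpha+1$ and $l\ge\alpha+2$ that are imposed when $p=\infty$. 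This produces $\|\nabla^\alpha f\|_{L^\infty}\lesssim 2^{j_0(s-m)}\|f\|_{\dot H^m}+2^{j_0(s-l)}\|f\|_{\dot H^l}$, and choosing $2^{j_0}=(\|f\|_{\dot H^l}/\|f\|_{\dot H^m})^{1/(l-m)}$ to balance the two terms yields $\|f\|_{\dot H^m}^{1-\theta}\|f\|_{\dot H^l}^{\theta}$ with $\theta=(s-m)/(l-m)$, in agreement with the scaling identity. Finally I would note that the compact support of $f\in C_0^\infty$ ensures all the low-frequency sums converge, so no additional constant is needed.
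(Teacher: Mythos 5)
Your proof is correct, but note that the paper does not actually prove this lemma: it is quoted as a known interpolation inequality with a citation to Tan--Wang, so there is no in-paper argument to compare against. Your Littlewood--Paley route is the standard self-contained proof of this Gagliardo--Nirenberg--Sobolev family, and all the key steps check out: the Bernstein block bound $\|\nabla^\alpha\dot\Delta_j f\|_{L^p}\lesssim 2^{js}c_j$ with $s=\alpha+3(\tfrac12-\tfrac1p)$, the square-function plus Minkowski reduction to $\ell^2$ for $2\le p<\infty$ followed by H\"older in $j$, and the $\ell^1$ summation with a balanced frequency cutoff at $p=\infty$. In particular you correctly identified the role of the endpoint hypotheses: $m\le\alpha+1$ and $l\ge\alpha+2$ give $s-m\ge\tfrac12>0$ and $s-l\le-\tfrac12<0$, which is exactly what makes the two geometric tails summable, and your choice $2^{j_0}=(\|f\|_{\dot H^l}/\|f\|_{\dot H^m})^{1/(l-m)}$ reproduces $\theta=(s-m)/(l-m)$ consistently with the stated exponent identity. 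Two cosmetic points: when $\theta\in\{0,1\}$ the H\"older step degenerates, but then \eqref{GN} is trivially the single-norm bound, so this costs nothing; and your closing appeal to compact support is not the real reason the low-frequency sum converges --- convergence comes from the strict inequality $s>m$ (respectively the H\"older bound when $p<\infty$), while $f\in C_0^\infty$ merely guarantees all norms are finite and the homogeneous decomposition $f=\sum_j\dot\Delta_j f$ is unambiguous (no polynomial ambiguity, since $f\in L^2$). Also, $j_0$ should be rounded to an integer, which only changes constants. What your argument buys over the paper's bare citation is a transparent proof in which the scaling identity, the admissible range of $\theta$, and the necessity of the extra conditions at $p=\infty$ are all visible from the same frequency bookkeeping.
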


For sufficiently small $\delta>0$, we assume
\begin{equation}\label{smallness}
\|u(t)\|_{H^1}^2+\|n(t)\|_{H^2}^2\le \delta.
\end{equation}
Then, with the help of interpolation inequality \eqref{GN}, we find
\begin{equation*}
\|\Lambda^s f\|_{L^2}\lesssim \|f\|_{L^2}^{1-\frac{s}{l}}\|\nabla^l f\|_{L^2}^{\frac{s}{l}}
\end{equation*}
for any real number $s\in [0, l]$. Hence, we deduce immediately
\begin{equation*}
\|\Lambda^\alpha u(t)\|_{L^2}+\|\Lambda^\beta n(t)\|_{L^2} \lesssim \delta
\end{equation*}
for any $\alpha \in [0, 1]$ and $\beta \in [0, 2]$.

Now, we establish the first inequality as follows.
\begin{lemm}\label{inequality1}
Under the assumption \eqref{smallness}, then we have
\begin{equation}\label{inequality1}
\frac{d}{dt} \int|n|^2 dx+\int |\nabla n|^2 dx \le 0.
\end{equation}
\end{lemm}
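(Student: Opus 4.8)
The plan is to run a basic $L^2$ energy estimate on the director equation $n_t+u\cdot\nabla n=\Delta n+|\nabla n|^2(n+w_0)$ from \eqref{equations2}. First I would take the $L^2$ inner product of this equation with $n$ and integrate over $\mathbb{R}^3$. The time-derivative term becomes $\frac12\frac{d}{dt}\int|n|^2\,dx$; the convection term $\int(u\cdot\nabla n)\cdot n\,dx=\frac12\int u\cdot\nabla|n|^2\,dx$ vanishes after integration by parts because ${\rm div}\,u=0$; and the diffusion term contributes $\int\Delta n\cdot n\,dx=-\int|\nabla n|^2\,dx$. This reduces the whole problem to controlling the single remaining contribution from the super-nonlinear forcing, namely $\int|\nabla n|^2(n+w_0)\cdot n\,dx$.

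The key to that nonlinear term is the geometric constraint recorded in \eqref{equal1}, namely $|n+w_0|=1$. Expanding $|n+w_0|^2=|n|^2+2\,n\cdot w_0+|w_0|^2=1$ and using $|w_0|=1$ yields $n\cdot w_0=-\frac12|n|^2$, hence
\[
(n+w_0)\cdot n=|n|^2+w_0\cdot n=\tfrac12|n|^2.
\]
Substituting this into the forcing term and collecting everything produces the identity
\[
\frac{d}{dt}\int|n|^2\,dx+2\int|\nabla n|^2\,dx=\int|\nabla n|^2\,|n|^2\,dx,
\]
which I would rewrite as $\frac{d}{dt}\int|n|^2\,dx+\int|\nabla n|^2\,dx=\int|\nabla n|^2\,(|n|^2-1)\,dx$.

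It then remains to show that the right-hand side is non-positive, and this is exactly where the smallness hypothesis \eqref{smallness} enters. Applying the interpolation inequality \eqref{GN} with $p=\infty$ (taking $m=0$, $l=2$, $\alpha=0$, so that $\theta=\frac34$) gives $\|n\|_{L^\infty}\lesssim\|n\|_{L^2}^{1/4}\|\nabla^2 n\|_{L^2}^{3/4}\lesssim\|n\|_{H^2}\lesssim\sqrt{\delta}$, so that for $\delta$ sufficiently small one has $|n(x,t)|^2\le\|n\|_{L^\infty}^2\le 1$ pointwise. Consequently $|\nabla n|^2(|n|^2-1)\le 0$ everywhere, the right-hand side is $\le 0$, and the inequality \eqref{inequality1} follows. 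The only genuine obstacle is the super-nonlinear term $|\nabla n|^2(n+w_0)\cdot n$; once it is tamed by the algebraic identity $(n+w_0)\cdot n=\frac12|n|^2$ coming from $|d|=1$, and then by the pointwise bound $|n|^2\le 1$ supplied by the smallness assumption, the estimate closes with no further effort.
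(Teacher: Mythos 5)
Your proposal is correct, and it follows the paper's overall scheme --- multiply $\eqref{equations2}_3$ by $n$, integrate by parts, use ${\rm div}\,u=0$ to kill the convection term --- but it handles the super-nonlinear term by a genuinely different mechanism. The paper (see \eqref{221}--\eqref{222}) simply bounds $(n+w_0)\cdot n\le |n+w_0|\,|n|=|n|$ by Cauchy--Schwarz and \eqref{equal1}, then uses $\|n\|_{L^\infty}\lesssim \|n\|_{H^2}\lesssim\sqrt{\delta}$ to get $\int|\nabla n|^2(n+w_0)\cdot n\,dx\lesssim \delta\|\nabla n\|_{L^2}^2$, and closes by absorbing this into the dissipation for $\delta$ small. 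You instead exploit the exact algebraic consequence of the geometric constraint: $|n+w_0|=|w_0|=1$ forces $n\cdot w_0=-\frac12|n|^2$, hence $(n+w_0)\cdot n=\frac12|n|^2$ identically, so the energy identity becomes $\frac{d}{dt}\int|n|^2\,dx+\int|\nabla n|^2\,dx=\int|\nabla n|^2\bigl(|n|^2-1\bigr)\,dx$, and the pointwise bound $|n|\le 1$ (from the same $L^\infty$ interpolation \eqref{GN} with $\theta=\frac34$ and the smallness \eqref{smallness}) makes the right-hand side non-positive. What each approach buys: your identity yields \eqref{inequality1} with exactly the stated constants, with no absorption step (the paper's conclusion, strictly read, gives $\frac{d}{dt}\int|n|^2\,dx+2(1-C\delta)\int|\nabla n|^2\,dx\le 0$ and needs $\delta$ small to recover coefficient $1$), and it makes transparent that the sign of the nonlinearity is geometric rather than merely small; the paper's cruder Cauchy--Schwarz bound, on the other hand, is more robust and is the form that transfers directly to the weighted $L^p$ estimates later (compare \eqref{323} in Lemma \ref{Lp}, where the same $(n+w_0)\cdot n\le|n|$ device is reused). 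Both arguments ultimately rest on the same $L^\infty$-smallness of $n$ supplied by \eqref{smallness}, so there is no gap in your proof.
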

\begin{proof}
Multiplying $\eqref{equations2}_3$ by $n$ and integrating over $\mathbb{R}^3$(by part), it is easy to obtain
\begin{equation}\label{221}
\frac{1}{2}\frac{d}{dt}\int |n|^2 dx+\int |\nabla n|^2 dx=\int |\nabla n|^2 (n+w_0) \cdot n dx.
\end{equation}
The application of Sobolev inequality, together with \eqref{smallness}, yields at once
\begin{equation}\label{222}
\int |\nabla n|^2 (n+w_0) \cdot ndx
\le \int |\nabla n|^2 |n|dx
\lesssim \delta \|\nabla n\|_{L^2}^2.
\end{equation}
Combining \eqref{221} with \eqref{222} yields directly
\begin{equation*}
\frac{1}{2}\frac{d}{dt}\int |n|^2 dx+\int |\nabla n|^2 dx
\lesssim \delta \|\nabla n\|_{L^2}^2,
\end{equation*}
which completes the proof of the lemma.
\end{proof}

Next, we will establish the following estimates that play an important role not only in the
study of global existence of solution but also in the study of the convergence rates
for the higher-order spatial derivatives of velocity and director.
\begin{lemm}\label{inequality2}
Under the assumption \eqref{smallness}, then for any $k=0,1,2,..., N,$ we have
\begin{equation}\label{inequality2}
\frac{d}{dt}\int (|\nabla^k u|^2+|\nabla^{k+1} n|^2) dx
+\int (|\nabla^{k+1} u|^2+|\nabla^{k+2} n|^2) dx\le 0.
\end{equation}
\end{lemm}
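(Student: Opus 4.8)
The plan is to perform a standard high-order energy estimate on the system \eqref{equations2}, combining the momentum and director equations so that the cross terms coming from the coupling cancel. First I would apply $\nabla^k$ to the velocity equation $\eqref{equations2}_1$, take the $L^2$ inner product with $\nabla^k u$, and integrate by parts. The viscosity term yields $\|\nabla^{k+1} u\|_{L^2}^2$, the pressure term vanishes because ${\rm div}\,u=0$, and the convection term $\nabla^k(u\cdot\nabla u)$ is estimated by the product rule and \eqref{GN} to be bounded by $\delta(\|\nabla^{k+1}u\|_{L^2}^2+\|\nabla^{k+1}n\|_{L^2}^2)$ or similar, using the smallness assumption \eqref{smallness}. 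In parallel, I would apply $\nabla^{k+1}$ to the director equation $\eqref{equations2}_3$ and pair with $\nabla^{k+1} n$; the $\Delta n$ term produces $\|\nabla^{k+2} n\|_{L^2}^2$.

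The crucial structural point is the interaction between $-{\rm div}(\nabla n\odot\nabla n)$ in the momentum equation and $-u\cdot\nabla n$ in the director equation. After integration by parts, the contribution of the stress term $\int \nabla^k{\rm div}(\nabla n\odot\nabla n)\cdot\nabla^k u\,dx$ should, up to higher-order remainder terms controlled by $\delta$, cancel against the transport term $\int \nabla^{k+1}(u\cdot\nabla n)\cdot\nabla^{k+1} n\,dx$. This is the standard liquid-crystal cancellation that makes the coupled energy coercive; verifying it at the level of $\nabla^k$ and $\nabla^{k+1}$ (rather than just $k=0$) is where the bookkeeping lives. Concretely, I would integrate by parts to move one derivative off $\nabla n\odot\nabla n$ onto $\nabla^k u$, rewrite the leading piece in divergence form, and match it against the leading piece of the transport term after using ${\rm div}\,u=0$.

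After adding the two identities, the good terms $\|\nabla^{k+1}u\|_{L^2}^2+\|\nabla^{k+2}n\|_{L^2}^2$ appear on the left with the time-derivative of $\int(|\nabla^k u|^2+|\nabla^{k+1}n|^2)\,dx$, while every remaining nonlinear term should be estimated, via the Leibniz rule together with the Gagliardo--Nirenberg inequality \eqref{GN} and the a priori smallness \eqref{smallness}, by a constant multiple of $\delta\,(\|\nabla^{k+1}u\|_{L^2}^2+\|\nabla^{k+2}n\|_{L^2}^2)$. The super-nonlinear term $|\nabla n|^2(n+w_0)$ is the one to watch: after applying $\nabla^{k+1}$ it generates terms like $\nabla^{k+1}(|\nabla n|^2)\,(n+w_0)$ and lower-order distributions of derivatives; here I would use $|n+w_0|=1$ from \eqref{equal1} to control the undifferentiated factor and rely on \eqref{GN} to place the derivatives in $L^4$ or $L^\infty$ so that each summand carries a factor of $\|\nabla n\|_{H^2}\lesssim\sqrt{\delta}$.

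I expect the main obstacle to be precisely the treatment of $|\nabla n|^2(n+w_0)$ at general order $k$. Because it is quadratic in $\nabla n$ and multiplied by a bounded but nonconstant factor, distributing $k+1$ derivatives produces many terms; the delicate case is the top-order piece where all derivatives land on a single gradient factor, which must be absorbed into the dissipation $\|\nabla^{k+2}n\|_{L^2}^2$ rather than merely bounded by it. Choosing $\delta$ small enough that the sum of all nonlinear contributions is strictly less than the good terms is what finally yields the clean inequality \eqref{inequality2} with zero on the right-hand side, exactly as in Lemma \ref{inequality1}.
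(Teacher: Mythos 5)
Your overall architecture is the same as the paper's: apply $\nabla^k$ to the momentum equation and $\nabla^{k+1}$ to the director equation, test with $\nabla^k u$ and $\nabla^{k+1}n$ respectively, add, and absorb every nonlinear term into $\delta$ times the dissipation via Leibniz and \eqref{GN}. But your ``crucial structural point'' --- the cancellation between $\int\nabla^k\mathrm{div}(\nabla n\odot\nabla n)\cdot\nabla^k u\,dx$ and $\int\nabla^{k+1}(u\cdot\nabla n)\cdot\nabla^{k+1}n\,dx$ --- is \emph{not} what the paper does, and under the smallness hypothesis it is unnecessary. In \eqref{233} the paper estimates $I_2$ and $I_3$ independently: for $I_2$ it integrates by parts once to move a derivative onto $u$ (see \eqref{2314}), and for $I_3$ it uses incompressibility only to defuse the top-order transport piece, as in \eqref{237}, where $-\int\nabla u_i\,\partial_i n\,\nabla n\,dx+\frac12\int|\nabla n|^2\mathrm{div}\,u\,dx$ reduces everything to terms bounded by $\delta(\|\nabla^{k+1}u\|_{L^2}^2+\|\nabla^{k+2}n\|_{L^2}^2)$. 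Your cancellation route is legitimate (it is the device needed for the basic energy law and for large data, where no smallness is available), and it would close here too, but at the cost of tracking the commutator ``remainders'' at order $k$ --- bookkeeping of the same weight as the paper's direct estimates, so it buys nothing under assumption \eqref{smallness}.

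There is, however, one step in your plan that would fail as written: for the supercritical term $|\nabla n|^2(n+w_0)$ you propose that ``each summand carries a factor of $\|\nabla n\|_{H^2}\lesssim\sqrt{\delta}$.'' That bound is not available. Assumption \eqref{smallness} controls $\|n\|_{H^2}$, i.e.\ at most \emph{two} derivatives of $n$; the norm $\|\nabla n\|_{H^2}$ involves $\nabla^3 n$, which is neither assumed small nor usable here without circularity --- this lemma is precisely the a priori estimate on which the continuity argument and the $H^{N+1}$ bound of Theorem \ref{global-existence} rest, and even that bound is merely finite, not small. For the same reason you cannot blanket-place derivative factors in $L^\infty$ ``for free.'' The fix, and the real content of the paper's proof, is the decomposition \eqref{2320} of $I_4$ into $I_{41},\dots,I_{44}$ combined with Gagliardo--Nirenberg \eqref{GN}: every factor is interpolated between a low norm $\|\nabla^\alpha n\|_{L^2}$ with $\alpha\in[0,2]$ (small by \eqref{smallness}) and the dissipation $\|\nabla^{k+2}n\|_{L^2}$, with the exponents arranged so that the dissipation enters with total power exactly $2$; this is what yields \eqref{2329}. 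You correctly identify the dangerous top-order piece where $k+2$ derivatives fall on a single factor, but without replacing your $\|\nabla n\|_{H^2}$ factors by such interpolations, the absorption step fails for the intermediate distributions of derivatives, and the clean inequality \eqref{inequality2} does not close.
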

\begin{proof}
Taking $k-$th spatial derivatives to $\eqref{equations2}_1$, multiplying the resulting equation by $\nabla^k u$ and integrating over $\mathbb{R}^3$(by part), we obtain
\begin{equation}\label{231}
\frac{1}{2}\frac{d}{dt}\int |\nabla^k u|^2 dx+\int |\nabla^{k+1} u|^2 dx
=-\int \nabla^k \left[u \cdot \nabla u+{\rm div}(\nabla n \odot \nabla n)\right] \cdot \nabla^k u \ dx.
\end{equation}
Differentiating $(k+1)$ times with respect to space variable and multiplying the resulting identity by $\nabla^{k+1} n$,
it is easy to deduce with the help of integration over $\mathbb{R}^3$
\begin{equation}\label{232}
\frac{1}{2}\frac{d}{dt}\int |\nabla^{k+1} n|^2 dx+\int |\nabla^{k+2} n|^2 dx
=\int \nabla^{k+1} \left[-u \cdot \nabla n+|\nabla n|^2(n+w_0)\right] \cdot \nabla^{k+1} n \ dx.
\end{equation}
If adding \eqref{231} to \eqref{232}, then it arrives at
\begin{equation}\label{233}
\begin{aligned}
&\frac{1}{2}\frac{d}{dt}\int (|\nabla^k u|^2+|\nabla^{k+1} n|^2) dx
+\int (|\nabla^{k+1} u|^2+|\nabla^{k+2}n|^2) dx\\
&=-\int \nabla^k (u \cdot \nabla u)\nabla^k u \  dx-\int \nabla^k {\rm div} (\nabla n \odot \nabla n)\nabla^k u \ dx\\
&  \quad -\int \nabla^{k+1} (u \cdot \nabla n) \nabla^{k+1} n \ dx+\int \nabla^{k+1} (|\nabla n|^2 (n+w_0))\nabla^{k+1}n \ dx\\
&=I_1+I_2+I_3+I_4.
\end{aligned}
\end{equation}
For the case $k=0$, then the identity \eqref{233} becomes the form as follows
\begin{equation}\label{234}
\begin{aligned}
&\frac{1}{2}\frac{d}{dt}\int (| u|^2+|\nabla n|^2) dx
+\int (|\nabla u|^2+|\nabla^2 n|^2) dx\\
&=-\int (u \cdot \nabla u) u \  dx-\int  {\rm div} (\nabla n \odot \nabla n) u \ dx\\
&  \ \ -\int \nabla (u \cdot \nabla n) \nabla n \ dx+\int \nabla (|\nabla n|^2 (n+w_0))\nabla n \ dx\\
&=I_1+I_2+I_3+I_4.
\end{aligned}
\end{equation}
In view of the incompressibility $\eqref{equations2}_2$, together with integration by part, it is easy to obtain
\begin{equation}\label{235}
\begin{aligned}
I_1
=\frac{1}{2} \int |u|^2 {\rm div}u\ dx=0.
\end{aligned}
\end{equation}
Integrating by part and applying Holder, Sobolev and Young inequalities, we have
\begin{equation}\label{236}
\begin{aligned}
I_2
&=\int \nabla n \odot \nabla n \cdot \nabla u \ dx\\
&\le \|\nabla n\|_{L^3} \|\nabla n\|_{L^6} \|\nabla u\|_{L^2}\\
&\lesssim \|\nabla n\|_{H^1}  \|\nabla^2 n\|_{L^2} \|\nabla u\|_{L^2}\\
&\lesssim \delta( \|\nabla u\|_{L^2}^2+ \|\nabla^2 n\|_{L^2}^2).
\end{aligned}
\end{equation}
By virtue of integration by part and the incompressibility, along with Holder, Sobolev and Young inequalities,
we conclude immediately
\begin{equation}\label{237}
\begin{aligned}
I_3
&=-\int \nabla u_i \partial_i n\  \nabla n \ dx+\frac{1}{2}\int |\nabla n|^2 {\rm div} u  \ dx\\
&\le \|\nabla n\|_{L^3} \|\nabla n\|_{L^6} \|\nabla u\|_{L^2}\\
&\lesssim \delta (\|\nabla u\|_{L^2}^2+\|\nabla^2 n\|_{L^2}^2).
\end{aligned}
\end{equation}
Exploiting \eqref{smallness}, Holder and Young inequalities, it arrives at
\begin{equation}\label{238}
\begin{aligned}
I_4
&\lesssim \int |\nabla n|^2 |\nabla^2 n| dx+\int |\nabla n|^4dx\\
&\lesssim \|\nabla n\|_{L^3}\|\nabla n\|_{L^6}\|\nabla^2 n\|_{L^2}+\|\nabla n\|_{L^2}\|\nabla^2 n\|_{L^2}^3\\
&\lesssim \|\nabla n\|_{H^1}\|\nabla^2 n\|_{L^2}^2+\|\nabla n\|_{L^2}\|\nabla^2 n\|_{L^2}^3\\
&\lesssim \delta \|\nabla^2 n\|_{L^2}^2.
\end{aligned}
\end{equation}
Substituting \eqref{235}-\eqref{238} into \eqref{234}, we complete the proof of \eqref{inequality2}
for the case $k=0$. Then, it is enough to verify \eqref{inequality2} for the case $1\le k\le N$.
First of all, we deal with the term $I_1$. In fact, applying Leibnitz formula, Holder and Sobolev
inequalities, it arrives at
\begin{equation}\label{239}
\begin{aligned}
I_1 &=-\int\sum_{l=0}^k C_k^l \nabla^l u \nabla^{k+1-l}u\ \nabla^k u dx\\
    &\lesssim \sum_{l=0}^k \|\nabla^l u\|_{L^3} \|\nabla^{k+1-l} u\|_{L^2} \|\nabla^{k+1} u \|_{L^2}.
\end{aligned}
\end{equation}
For the case $0 \le l \le \left[ \frac{k}{2}\right]$, by virtue of the interpolation inequality
\eqref{GN} and a priori estimates \eqref{smallness}, we find
\begin{equation}\label{2310}
\begin{aligned}
&\quad \|\nabla^l u\|_{L^3} \|\nabla^{k+1-l}u\|_{L^2}\|\nabla^{k+1} u\|_{L^2}\\
&\lesssim \|\nabla^\alpha u\|_{L^2}^{1-\frac{l}{k}} \|\nabla^{k+1}u\|_{L^2}^{\frac{l}{k}}
          \|\nabla u\|_{L^2}^{\frac{l}{k}} \|\nabla^{k+1} u\|_{L^2}^{1-\frac{l}{k}}\|\nabla^{k+1} u\|_{L^2}\\
&\lesssim \delta \|\nabla^{k+1}u\|_{L^2}^2,
\end{aligned}
\end{equation}
where $\alpha$ is defined by
\begin{equation}\label{2311}
\alpha=1-\frac{k}{2(k-l)}\in \left[0, \frac{1}{2}\right].
\end{equation}
Similarly, for the case $\left[\frac{k}{2}\right]+1 \le l \le k,$ we have
\begin{equation}\label{2312}
\begin{aligned}
&\quad \|\nabla^l u\|_{L^3} \|\nabla^{k+1-l}u\|_{L^2}\|\nabla^{k+1} u\|_{L^2}\\
&\lesssim \|u\|_{L^2}^{1-\frac{l+\frac{1}{2}}{k+1}} \|\nabla^{k+1}u\|_{L^2}^{\frac{l+\frac{1}{2}}{k+1}}
          \|\nabla^\alpha u\|_{L^2}^{\frac{l+\frac{1}{2}}{k+1}} \|\nabla^{k+1} u\|_{L^2}^{1-\frac{l+\frac{1}{2}}{k+1}}\|\nabla^{k+1} u\|_{L^2}\\
&\lesssim \delta \|\nabla^{k+1} u\|_{L^2}^2,\\
\end{aligned}
\end{equation}
where $\alpha$ is defined by
\begin{equation*}
\alpha=\frac{k+1}{2l+1} \in \left(\frac{1}{2}, 1\right).
\end{equation*}
Substituting \eqref{2310} and \eqref{2312} into \eqref{239}, then we conclude
\begin{equation}\label{2313}
I_1 \lesssim \delta \|\nabla^{k+1} u\|_{L^2}^2.
\end{equation}
By integration by part, Leibnitz formula and Holder inequality, the second term
on the right hand side of \eqref{233} can be estimated as
\begin{equation}\label{2314}
\begin{aligned}
I_2 &=\int \nabla^k (\nabla n \odot \nabla n)\cdot \nabla^{k+1} u \ dx\\
    &=\int\sum_{l=0}^k C_k^l \nabla^{l+1} n \nabla^{k+1-l} n\ \nabla^{k+1} u \ dx\\
    &\lesssim \sum_{l=0}^k \|\nabla^{l+1} n\|_{L^3} \|\nabla^{k+1-l} n\|_{L^6} \|\nabla^{k+1} u \|_{L^2}.
\end{aligned}
\end{equation}
For the case $0 \le l \le \left[ \frac{k}{2}\right]$, in view of \eqref{GN} and Young inequality, it arrives at
\begin{equation}\label{2315}
\begin{aligned}
&\quad \|\nabla^{l+1} n\|_{L^3} \|\nabla^{k+1-l}n\|_{L^6}\|\nabla^{k+1} u\|_{L^2}\\
&\lesssim \|\nabla^\alpha n\|_{L^2}^{1-\frac{l}{k+1}} \|\nabla^{k+2} n\|_{L^2}^{\frac{l}{k+1}}
          \|\nabla n\|_{L^2}^{\frac{l}{k+1}} \|\nabla^{k+2} n\|_{L^2}^{1-\frac{l}{k+1}}\|\nabla^{k+1} u\|_{L^2}\\
&\lesssim \|\nabla^\alpha n\|_{L^2}^{1-\frac{l}{k+1}} \|\nabla^{k+2} n\|_{L^2}^{\frac{l}{k+1}}
          \|\nabla n\|_{L^2}^{\frac{l}{k+1}} \|\nabla^{k+2} n\|_{L^2}^{1-\frac{l}{k+1}}\|\nabla^{k+1} u\|_{L^2}\\
&\lesssim \delta (\|\nabla^{k+1}u\|_{L^2}^2+\|\nabla^{k+2} n\|_{L^2}^2),
\end{aligned}
\end{equation}
where $\alpha$ is defined by
\begin{equation*}
\alpha=1+\frac{k+1}{2(k+1-l)}\in \left[\frac{3}{2}, 2\right].
\end{equation*}
By interpolation inequality \eqref{GN} and Young inequality,
for the case $\left[\frac{k}{2}\right]+1 \le l \le k$, it follows
\begin{equation}\label{2316}
\begin{aligned}
&\quad \|\nabla^{l+1} n\|_{L^3} \|\nabla^{k+1-l} n\|_{L^6}\|\nabla^{k+1} u\|_{L^2}\\
&\lesssim \|\nabla n\|_{L^2}^{1-\frac{l+\frac{1}{2}}{k+1}} \|\nabla^{k+2} n\|_{L^2}^{\frac{l+\frac{1}{2}}{k+1}}
          \|\nabla^\alpha n\|_{L^2}^{\frac{l+\frac{1}{2}}{k+1}} \|\nabla^{k+2} n\|_{L^2}^{1-\frac{l+\frac{1}{2}}{k+1}}
          \|\nabla^{k+1} u\|_{L^2}\\
&\lesssim \delta (\|\nabla^{k+1}u\|_{L^2}^2+\|\nabla^{l+2} n\|_{L^2}^2),
\end{aligned}
\end{equation}
where $\alpha$ is defined by
\begin{equation}\label{2317}
\alpha=1+\frac{k+1}{2l+1} \in \left(\frac{3}{2}, 2\right).
\end{equation}
If inserting \eqref{2315} and \eqref{2316} into \eqref{2314}, we obtain immediately
\begin{equation}\label{2318}
I_2 \lesssim \delta (\|\nabla^{k+1} u\|_{L^2}^2+\|\nabla^{k+2} n\|_{L^2}^2).
\end{equation}
Similar to the estimate $I_1$, we can estimate $I_3$ as follows
\begin{equation}\label{2319}
I_3 \lesssim \delta (\|\nabla^{k+1} u\|_{L^2}^2+\|\nabla^{k+2} n\|_{L^2}^2).
\end{equation}
Since the term $|\nabla n|^2 (n+w_0)$ is a supercritical nonlinear term, the estimate of $I_4$ seems somewhat complicated.
Actually, by integration by part and Leibnitz formula, we find firstly
\begin{equation}\label{2320}
\begin{aligned}
I_4
& =-\int \sum_{l=0}^k C_k^l \nabla^l(|\nabla n|^2) \nabla^{k-l} (n+w_0) \nabla^{k+2} n \ dx\\
& =-\int \sum_{l=0}^k \sum_{m=0}^l C_k^l  C_l^m \nabla^{m+1} n \nabla^{l+1-m} n \nabla^{k-l} (n+w_0) \nabla^{k+2} n \ dx\\
& =-\int |\nabla n|^2 \nabla^k n \nabla^{k+2} n \ dx
   -\int \sum_{l=1}^{k-1} \sum_{m=0}^{l-1} C_k^l  C_l^m \nabla^{m+1} n \nabla^{l+1-m} n \nabla^{k-l}n \nabla^{k+2}n \ dx\\
&\ -\!\!\int \!\sum_{l=1}^{k-1} C_k^l \nabla^{l+1} n \nabla n \nabla^{k-l} n \nabla^{k+2} n  dx\!
   -\!\!\int\!\! \sum_{m=0}^{k} C_k^m  \nabla^{m+1} n \nabla^{k+1-m}n \ (n+w_0) \nabla^{k+2}n  dx\\
&=I_{41}+I_{42}+I_{43}+I_{44}.
\end{aligned}
\end{equation}
First of all, by Holder inequality and \eqref{GN}, it is easy to get
\begin{equation}\label{2321}
\begin{aligned}
I_{41}
&\le \|\nabla n\|_{L^6}\|\nabla n\|_{L^6}\|\nabla^k n\|_{L^6}\|\nabla^{k+2} n\|_{L^2}\\
&\lesssim \|\nabla^2 n\|_{L^2} \|\nabla n\|_{L^2}^{1-\frac{1}{k+1}}\|\nabla^{k+2} n\|_{L^2}^{\frac{1}{k+1}}\\
&\quad \times  \|\nabla n\|_{L^2}^{\frac{1}{k+1}}\|\nabla^{k+2} n\|_{L^2}^{1-\frac{1}{k+1}}
          \|\nabla^{k+2} n\|_{L^2}\\
&\lesssim \|\nabla^2 n\|_{L^2}\|\nabla n\|_{L^2}\|\nabla^{k+2} n\|_{L^2}^2\\
&\lesssim \delta \|\nabla^{k+2} n\|_{L^2}^2.
\end{aligned}
\end{equation}
For the term $I_{42}$, exploiting the Holder inequality and interpolation inequality \eqref{GN}
for the case $1\le l \le \left[\frac{k-1}{2}\right]$, we obtain directly
\begin{equation}\label{2322}
\begin{aligned}
&\|\nabla^{m+1} n\|_{L^6}\|\nabla^{l+1-m} n\|_{L^6}\|\nabla^{k-l} n\|_{L^6}\|\nabla^{k+2} n\|_{L^2}\\
&\lesssim \|\nabla^\alpha n\|_{L^2}^{1-\frac{m+1}{k}}\|\nabla^{k+2} n\|_{L^2}^{\frac{m+1}{k}}
          \|\nabla^2 n\|_{L^2}^{1-\frac{l-m}{k}}\|\nabla^{k+2} n\|_{L^2}^{\frac{l-m}{k}}\\
&\quad    \times \|\nabla^2 n\|_{L^2}^{\frac{l+1}{k}} \|\nabla^{k+2} n\|_{L^2}^{1-\frac{l+1}{k}}
          \|\nabla^{k+2} n\|_{L^2}\\
&\lesssim \|\nabla^\alpha n\|_{L^2}^{1-\frac{m+1}{k}}\|\nabla^2 n\|_{L^2}^{1+\frac{m+1}{k}}
          \|\nabla^{k+2} n\|_{L^2}^2\\
&\lesssim \delta \|\nabla^{k+2} n\|_{L^2}^2,
\end{aligned}
\end{equation}
where $\alpha$ is defined by
\begin{equation*}
\alpha=2-\frac{k}{k-(m+1)}\in (0,1).
\end{equation*}
Similarly, for the case $\left[\frac{k-1}{2}\right]+1 \le l \le k-1$, it is easy to get
\begin{equation}\label{2323}
\begin{aligned}
&\|\nabla^{m+1} n\|_{L^6}\|\nabla^{l+1-m} n\|_{L^6}\|\nabla^{k-l} n\|_{L^6}\|\nabla^{k+2} n\|_{L^2}\\
&\lesssim \|\nabla^2 n\|_{L^2}^{1-\frac{m}{k}}\|\nabla^{k+2} n\|_{L^2}^{\frac{m}{k}}
          \|\nabla^2 n\|_{L^2}^{1-\frac{l-m}{k}}\|\nabla^{k+2} n\|_{L^2}^{\frac{l-m}{k}}\\
&\quad    \times \|\nabla^\alpha n\|_{L^2}^{\frac{l}{k}} \|\nabla^{k+2} n\|_{L^2}^{1-\frac{l}{k}}
          \|\nabla^{k+2} n\|_{L^2}\\
&\lesssim \|\nabla^2 n\|_{L^2}^{2-\frac{l}{k}} \|\nabla^\alpha n\|_{L^2}^{\frac{l}{k}}
          \|\nabla^{k+2} n\|_{L^2}^2\\
&\lesssim \delta \|\nabla^{k+2} n\|_{L^2}^2,
\end{aligned}
\end{equation}
where $\alpha$ is defined by
\begin{equation*}
\alpha=2-\frac{k}{l}\in [0,1).
\end{equation*}
The combination of \eqref{2322} and \eqref{2323} yields immediately
\begin{equation}\label{2324}
I_{42} \lesssim \delta \|\nabla^{k+2} n\|_{L^2}^2.
\end{equation}
In view of \eqref{GN} and Holder inequality, the term $I_{43}$ can be estimated as
\begin{equation}\label{2325}
\begin{aligned}
I_{43}
&\le  \sum_{l=1}^{k-1} \|\nabla^{l+1} n\|_{L^6} \|\nabla n\|_{L^6}
       \|\nabla^{k-l} n\|_{L^6} \|\nabla^{k+2} n\|_{L^2}\\
&\lesssim  \sum_{l=1}^{k-1} \|\nabla n\|_{L^2}^{1-\frac{l+1}{k+1}}\|\nabla^{k+2} n\|_{L^2}^{\frac{l+1}{k+1}}
       \|\nabla^2 n\|_{L^2} \|\nabla n\|_{L^2}^{\frac{l+1}{k+1}} \|\nabla^{k+2} n\|_{L^2}^{1-\frac{l+1}{k+1}}
       \|\nabla^{k+2} n\|_{L^2}\\
&\lesssim \delta\|\nabla^{k+2} n\|_{L^2}^2.
\end{aligned}
\end{equation}
To deal with the term $I_{44}$ for the case $0 \le m \le \left[\frac{k}{2}\right]$,
by the Holder inequality and interpolation inequality \eqref{GN}, we obtain
\begin{equation}\label{2326}
\begin{aligned}
&\quad \|\nabla^{m+1} n\|_{L^3}\|\nabla^{k+1-m}n\|_{L^6}\|\nabla^{k+2}n\|_{L^2}\\
&\lesssim \|\nabla^\alpha n\|_{L^2}^{1-\frac{m}{k}}\|\nabla^{k+2} n\|_{L^2}^{\frac{m}{k}}
         \|\nabla^2 n\|_{L^2}^{\frac{m}{k}}\|\nabla^{k+2} n\|_{L^2}^{1-\frac{m}{k}}
         \|\nabla^{k+2} n\|_{L^2}\\
&\lesssim \delta \|\nabla^{k+2} n\|_{L^2}^2,
\end{aligned}
\end{equation}
where $\alpha$ is defined by
\begin{equation*}
\alpha=2-\frac{k}{2(k-m)} \in \left[1, \frac{3}{2}\right).
\end{equation*}
Similarly, for the case $\left[\frac{k}{2}\right]+1 \le m \le k$, it is easy to find
\begin{equation}\label{2327}
\begin{aligned}
&\quad \|\nabla^{m+1} n\|_{L^3}\|\nabla^{k+1-m}n\|_{L^6}\|\nabla^{k+2}n\|_{L^2}\\
&\lesssim \|\nabla^2 n\|_{L^2}^{1-\frac{m-\frac{1}{2}}{k}}\|\nabla^{k+2} n \|_{L^2}^{\frac{m-\frac{1}{2}}{k}}
         \|\nabla^\alpha n\|_{L^2}^{\frac{m-\frac{1}{2}}{k}}\|\nabla^{k+2} n\|_{L^2}^{1-\frac{m-\frac{1}{2}}{k}}
         \|\nabla^{k+2} n\|_{L^2}\\
&\lesssim \delta \|\nabla^{k+2} n\|_{L^2}^2,
\end{aligned}
\end{equation}
where $\alpha$ is defined by
\begin{equation*}
\alpha=2-\frac{k}{2m-1} \in \left[1, \frac{3}{2}\right).
\end{equation*}
Combining \eqref{2326} with \eqref{2327}, the term $I_{44}$ can be estimated as
\begin{equation}\label{2328}
I_{44}\lesssim \delta \|\nabla^{k+2} n\|_{L^2}^2.
\end{equation}
Inserting \eqref{2321}, \eqref{2324}, \eqref{2325} and \eqref{2328} into \eqref{2320}, then we obtain
\begin{equation}\label{2329}
I_{4} \lesssim \delta \|\nabla^{k+2} n\|_{L^2}^2.
\end{equation}
Substituting \eqref{2313},\eqref{2318},\eqref{2319} and \eqref{2329} into \eqref{233}, we complete the proof of lemma.
\end{proof}

\subsection{Proof of Theorem \ref{global-existence}}

\quad In this subsection, we will construct the global existence of solution for the incompressible nematic liquid crystal
flows \eqref{1.1}-\eqref{1.3}. In fact, after having the estimates \eqref{inequality1} and \eqref{inequality2} at hand, the proof for the Theorem \ref{global-existence} follows immediately.
Indeed, summing up \eqref{inequality2} from $k=0$ to $k=m(1\le m \le N)$ , then we have
\begin{equation*}
\frac{d}{dt}\left(\|u\|_{H^m}^2+\|\nabla n\|_{H^m}^2\right)
+\left(\|\nabla u\|_{H^m}^2+\|\nabla^2 n\|_{H^m}^2\right)\le 0.
\end{equation*}
which, together with \eqref{inequality1}, yields
\begin{equation}\label{2330}
\frac{d}{dt}\left(\|u\|_{H^{m}}^2+\| n\|_{H^{m+1}}^2\right)
+\left(\|\nabla u\|_{H^m}^2+\|\nabla n\|_{H^{m+1}}^2\right)\le 0.
\end{equation}
Taking $m=1$ in \eqref{2330} specially, it is easy to get
\begin{equation*}
\frac{d}{dt}\left(\|u\|_{H^1}^2+\|n\|_{H^2}^2\right)
+\left(\|\nabla u\|_{H^1}^2+\|\nabla n\|_{H^2}^2\right)\le 0.
\end{equation*}
Integrating the proceeding inequality over $[0, t]$, we find
\begin{equation*}
\|u(t)\|_{H^1}^2+\|n(t)\|_{H^2}^2 \le \|u_0\|_{H^1}^2+\|n_0\|_{H^2}^2.
\end{equation*}
Therefore, by the standard continuity argument, we can close the estimate \eqref{smallness}. Taking $m=N$ in \eqref{2330}
and integrating the resulting inequality over $[0, t]$, we complete the proof of Theorem \ref{global-existence}.

\section{Decay rates of solution}

\quad In this section, we will derive the decay rates for the velocity, pressure and direction field.
First of all, we will establish the decay rate for $d-w_0$ with $L^p$ norm by energy method developed
by Kawashima et al. \cite{K-N-N}. Secondly, applying the method by Dai \cite{{Dai2}}, we obtain the time
convergence rates for the velocity. Since the proof of time decay rates for the velocity is standard,
we only state the results and omit the proof for brevity. Furthermore, one establishes the convergence rates
for the higher-order spatial derivatives of velocity and director motivated by Lemma \ref{motivated}. Finally,
we also study the convergence rates for the pressure and mixed space-time derivatives of velocity and director.

\subsection{ Decay rate for velocity and direction field}

\quad  In this subsection, we obtain the optimal decay rate for the director by assuming that the initial
data belongs to $L^1$ Lebesgue space additionally. Now, we establish the first inequality.

\begin{lemm}\label{L1}
Under the assumption \eqref{smallness}, then we have
\begin{equation}\label{L1}
\int |n| dx \le C_0.
\end{equation}
\end{lemm}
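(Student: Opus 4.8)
The plan is to derive a differential inequality for $\int|n|\,dx$ in the spirit of the $L^1$ energy method of Kawashima et al.\ \cite{K-N-N}, and then integrate in time using the dissipation already controlled by Lemma \ref{inequality1}. Since $|n|$ fails to be differentiable where $n$ vanishes, I would first regularize the absolute value by working with $g_\varepsilon:=\sqrt{|n|^2+\varepsilon^2}$ for $\varepsilon>0$, test the director equation $\eqref{equations2}_3$ against $n/g_\varepsilon$, integrate over $\mathbb{R}^3$, and send $\varepsilon\to 0$ at the end.

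Carrying this out, the time-derivative term produces $\frac{d}{dt}\int g_\varepsilon\,dx$. The convection term $\int (u\cdot\nabla n)\cdot\frac{n}{g_\varepsilon}\,dx=\int u\cdot\nabla g_\varepsilon\,dx$ vanishes after integration by parts because of the incompressibility $\eqref{equations2}_2$. For the diffusion term I would integrate by parts to get $-\int \nabla n:\nabla\!\left(\frac{n}{g_\varepsilon}\right)dx$; a direct computation yields $\nabla n:\nabla(n/g_\varepsilon)=\frac{|\nabla n|^2}{g_\varepsilon}-\frac{\sum_j(n\cdot\partial_j n)^2}{g_\varepsilon^3}$, which is nonnegative by the Cauchy--Schwarz inequality $(n\cdot\partial_j n)^2\le|n|^2|\partial_j n|^2\le g_\varepsilon^2|\partial_j n|^2$. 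Hence the diffusion term carries the favourable sign and may be discarded.

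The remaining and genuinely delicate piece is the supercritical term $\int |\nabla n|^2\,(n+w_0)\cdot\frac{n}{g_\varepsilon}\,dx$. Here I would exploit the geometric constraint $\eqref{equal1}$, namely $|n+w_0|=|d|=1$, together with $\left|\frac{n}{g_\varepsilon}\right|\le 1$, to bound the integrand pointwise by $|\nabla n|^2$; thus this term is controlled by $\int|\nabla n|^2\,dx$. Combining the four contributions and letting $\varepsilon\to 0$ yields
\begin{equation*}
\frac{d}{dt}\int |n|\,dx \le \int |\nabla n|^2\,dx.
\end{equation*}

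Finally I would integrate this over $[0,t]$ and invoke Lemma \ref{inequality1}: integrating the inequality of that lemma in time gives $\int_0^\infty\!\int|\nabla n|^2\,dx\,d\tau\le \|n_0\|_{L^2}^2<\infty$, whence $\int|n(t)|\,dx\le \|n_0\|_{L^1}+\|n_0\|_{L^2}^2=:C_0$, as claimed (the finiteness of $\|n_0\|_{L^1}$ being exactly the additional $L^1$ hypothesis of Theorem \ref{rates1}). The main obstacle is the supercritical nonlinearity $|\nabla n|^2(n+w_0)$: unlike in a scalar or subcritical setting it cannot be absorbed into the diffusion, and the observation that rescues the argument is that the unit-length constraint $|d|=1$ makes its contribution no worse than the dissipation rate $\|\nabla n\|_{L^2}^2$, which is time-integrable. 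A secondary technical point, routine once the sign of the regularized diffusion term is established, is the justification of the $\varepsilon\to0$ passage.
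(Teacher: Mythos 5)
Your proposal is correct and follows essentially the same route as the paper: test $\eqref{equations2}_3$ against $n/|n|$, discard the diffusion term via the Cauchy--Schwarz sign computation $|n\cdot\nabla n|^2\le |n|^2|\nabla n|^2$, bound the supercritical term by $\|\nabla n\|_{L^2}^2$ using $|n+w_0|=1$, and integrate in time against the globally integrable dissipation. Your $\varepsilon$-regularization $g_\varepsilon=\sqrt{|n|^2+\varepsilon^2}$ and the explicit observation that the convection term vanishes by incompressibility are careful technical touches that the paper performs only formally, but they do not alter the argument.
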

\begin{proof}
Multiplying $\eqref{equations2}_3$ by $\frac{n}{|n|}$ and integrating over $\mathbb{R}^3$(by part), it arrives at
\begin{equation}\label{311}
\begin{aligned}
\frac{d}{dt}\int |n| dx
=\int \Delta n \frac{n}{|n|}dx
+\int |\nabla n|^2 \frac{(n+w_0)\cdot n}{|n|}dx.
\end{aligned}
\end{equation}
By virtue of integration by part and Cauchy inequality, it is easy to obtain
\begin{equation}\label{312}
\begin{aligned}
\int \Delta n \frac{n}{|n|}dx
&=-\int \frac{|\nabla n|^2}{|n|}dx
  +\int\frac{|n \cdot \nabla n|^2}{|n|^3}dx\\
&\le -\int \frac{|\nabla n|^2}{|n|}dx
  +\int\frac{|n|^2 |\nabla n|^2}{|n|^3}dx=0.
\end{aligned}
\end{equation}
In view of $|(n+w_0)(x,t)|=1$, plugging \eqref{312} into \eqref{311}, we get immediately
\begin{equation*}
\begin{aligned}
\frac{d}{dt}\int |n| dx
\le \int |\nabla n|^2 dx,
\end{aligned}
\end{equation*}
which, integrating over $[0, t]$, yields
\begin{equation*}
\int |n| dx \le \int |n_0|dx+\int_0^t \int |\nabla n|^2 dx d\tau,
\end{equation*}
which, together with \eqref{1.5}, completes the proof of the lemma.
\end{proof}

Next, we will establish the $L^p$ integrability for the director.

\begin{lemm}\label{Lp}
Under the assumption \eqref{smallness}, then we have for any $p \ge 2$,
\begin{equation}\label{Lp}
\frac{d}{dt}\int |n|^p dx+C_p \int |\nabla |n|^{\frac{p}{2}}|^2 dx \le 0,
\end{equation}
where $C_p$ is constant depending on $p$.
\end{lemm}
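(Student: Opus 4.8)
The plan is to derive an energy identity for $\int |n|^p\,dx$ by multiplying the director equation $\eqref{equations2}_3$ by $|n|^{p-2}n$ and integrating over $\mathbb{R}^3$. Using $\frac{d}{dt}\int|n|^p\,dx = p\int |n|^{p-2}n\cdot n_t\,dx$, the transport term $u\cdot\nabla n$ will vanish after integration by parts thanks to the incompressibility $\mathrm{div}\,u=0$, exactly as in the first-order estimate: one rewrites $\int |n|^{p-2}n\cdot(u\cdot\nabla n)\,dx = \frac1p\int u\cdot\nabla(|n|^p)\,dx = -\frac1p\int|n|^p\,\mathrm{div}\,u\,dx = 0$. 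This disposes of the convection term at no cost.

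Next I would handle the dissipative term $\int |n|^{p-2}n\cdot\Delta n\,dx$. Integrating by parts produces $-\int\nabla(|n|^{p-2}n):\nabla n\,dx$, and expanding the derivative of $|n|^{p-2}n$ gives two contributions: a term $-\int|n|^{p-2}|\nabla n|^2\,dx$ and a term $-(p-2)\int|n|^{p-4}|n\cdot\nabla n|^2\,dx$, both with favorable sign. The key algebraic fact is that these combine to control $\int|\nabla|n|^{\frac{p}{2}}|^2\,dx$: since $\nabla|n|^{\frac{p}{2}} = \frac{p}{2}|n|^{\frac{p}{2}-2}(n\cdot\nabla n)$ (at points where $n\neq 0$), one has $|\nabla|n|^{\frac{p}{2}}|^2 = \frac{p^2}{4}|n|^{p-4}|n\cdot\nabla n|^2 \le \frac{p^2}{4}|n|^{p-2}|\nabla n|^2$, so the dissipation dominates $c_p\int|\nabla|n|^{\frac{p}{2}}|^2\,dx$ for a suitable positive constant. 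This is where the constant $C_p$ in the statement originates.

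The remaining piece is the supercritical nonlinearity $\int|n|^{p-2}n\cdot|\nabla n|^2(n+w_0)\,dx$. Here I would use the crucial geometric constraint $|n+w_0|=1$ from \eqref{equal1}, which bounds $|(n+w_0)\cdot n|\le|n|$, so this term is controlled by $\int|\nabla n|^2|n|^{p-1}\,dx$. The obstacle is that this forcing term is at the same differential order as the dissipation, so I cannot simply absorb it by Young's inequality against a higher derivative; instead I must exploit the smallness assumption \eqref{smallness}. The natural route is to write $\int|\nabla n|^2|n|^{p-1}\,dx$ in terms of $|n|^{\frac{p}{2}}$ and $\nabla|n|^{\frac{p}{2}}$ via Hölder and the Gagliardo--Nirenberg inequality \eqref{GN}, extracting a factor like $\|\nabla n\|_{H^1}\lesssim\delta^{1/2}$ so that the whole nonlinear contribution is bounded by $\delta\int|\nabla|n|^{\frac{p}{2}}|^2\,dx$. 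Choosing $\delta$ small enough lets this be absorbed into the dissipation, leaving the stated differential inequality with a strictly positive $C_p$.

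The main obstacle, as flagged in the introduction, is the supercritical term $|\nabla n|^2(n+w_0)$: without the pointwise identity $|n+w_0|=1$ it would be genuinely dangerous, and the delicate point is arranging the interpolation so that its contribution is controlled by the dissipation $\int|\nabla|n|^{\frac{p}{2}}|^2\,dx$ rather than by an uncontrolled higher-order quantity. I expect the bookkeeping of the $p$-dependent constants and the careful tracking of which norms carry the small factor $\delta$ to be the technically demanding part, but the structure — convection vanishes, dissipation dominates, nonlinearity is absorbed by smallness — is exactly parallel to Lemma \ref{inequality1}.
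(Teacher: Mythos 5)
Your setup matches the paper's proof step for step up to the last move: the multiplier $|n|^{p-2}n$, the vanishing of the convection term via ${\rm div}\,u=0$, the dissipation identity $-\int \Delta n\,|n|^{p-2}n\,dx=(p-2)\int |n|^{p-4}|n\cdot\nabla n|^{2}dx+\int |n|^{p-2}|\nabla n|^{2}dx$, the geometric bound $|(n+w_0)\cdot n|\le |n|$, and the identity $\int |n|^{p-4}|n\cdot\nabla n|^{2}dx=\frac{4}{p^{2}}\int |\nabla |n|^{\frac{p}{2}}|^{2}dx$ are all exactly the paper's \eqref{321}--\eqref{325}. The one step that would fail as you describe it is the absorption of the supercritical term: you propose to bound $\int |\nabla n|^{2}|n|^{p-1}dx$ by $\delta\int |\nabla |n|^{\frac{p}{2}}|^{2}dx$ via H\"older and \eqref{GN}. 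No such bound is available, because $\nabla |n|^{\frac{p}{2}}=\frac{p}{2}|n|^{\frac{p}{2}-2}(n\cdot\nabla n)$ sees only the radial part of $\nabla n$: for $n$ with slowly varying modulus but rapidly rotating direction, $\nabla |n|^{\frac{p}{2}}$ is negligible while $\int |\nabla n|^{2}|n|^{p-1}dx$ is of order one, so the nonlinearity cannot be dominated by the gradient-of-modulus dissipation alone, however you interpolate. Extracting $\|\nabla n\|_{H^1}\lesssim\delta^{1/2}$ from one factor of $|\nabla n|$ likewise leaves a product that does not reassemble into $|\nabla |n|^{\frac{p}{2}}|^{2}$.

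The fix is the paper's \eqref{323}, and it is both simpler and must be applied one step earlier than you place it: use the pointwise smallness of $n$ itself, $\|n\|_{L^\infty}\lesssim \|n\|_{H^2}\lesssim \delta^{1/2}$ by Sobolev embedding from \eqref{smallness}, to get $\int |\nabla n|^{2}|n|^{p-1}dx\lesssim \delta\int |n|^{p-2}|\nabla n|^{2}dx$, and absorb this into the \emph{full} dissipation term $\int |n|^{p-2}|\nabla n|^{2}dx$ (which does control the angular derivatives) before passing to $|n|^{\frac{p}{2}}$. After absorption, the leftover $(p-2)\int |n|^{p-4}|n\cdot\nabla n|^{2}dx+\frac{1}{2}\int |n|^{p-2}|\nabla n|^{2}dx$ dominates $\left(p-\frac{3}{2}\right)\int |n|^{p-4}|n\cdot\nabla n|^{2}dx$ by the pointwise Cauchy--Schwarz bound $|n\cdot\nabla n|^{2}\le |n|^{2}|\nabla n|^{2}$, and only then does the identity \eqref{325} convert this into the stated dissipation with $C_p=\frac{4\left(p-\frac{3}{2}\right)}{p}$. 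So your architecture is right, but the interpolation you flag as the technically demanding part is a dead end; the needed smallness is the elementary $L^\infty$ bound on $n$, not on $\nabla n$.
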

\begin{proof}
Multiplying $\eqref{equations2}_3$ by $|n|^{p-2}n$ and integrating over $\mathbb{R}^3$(by part), we obtain
\begin{equation}\label{321}
\frac{1}{p}\frac{d}{dt}\int |n|^p dx-\!\int \Delta n|n|^{p-2} n dx
=\!\int |\nabla n|^2 |n|^{p-2}(n+w_0)\cdot n dx.
\end{equation}
The integration by part yields at once
\begin{equation}\label{322}
-\int \Delta n|n|^{p-2}n dx=(p-2)\int |n\cdot \nabla n|^{2}|n|^{p-4}dx+\int |\nabla n|^2 |n|^{p-2}dx.
\end{equation}
With the help of $|(n+w_0)(x,t)|=1$ and Sobolev inequality, it arrives at
\begin{equation}\label{323}
\int |\nabla n|^2 |n|^{p-2}(n+w_0)\cdot n dx
\le \int |\nabla n|^2 |n|^{p-1}dx
\lesssim \delta \int |\nabla n|^2 |n|^{p-2}dx.
\end{equation}
Substituting \eqref{322} and \eqref{323} into \eqref{321}, by virtue of smallness of  $\delta $, then we obtain
\begin{equation}\label{324}
\frac{1}{p}\frac{d}{dt}\int |n|^p dx
+\left(p-\frac{3}{2}\right)\int |n \cdot \nabla n|^2 |n|^{p-4}dx\le 0.
\end{equation}
Computing directly will give the following identity, i.e.,
\begin{equation}\label{325}
\int |n \cdot \nabla n|^2 |n|^{p-4}dx
=\frac{4}{p^2}\int |\nabla |n|^\frac{p}{2}|^2 dx.
\end{equation}
Inserting  \eqref{325} into \eqref{324}, it follows
\begin{equation*}
\frac{d}{dt}\int |n|^p dx
+\frac{4\left(p-\frac{3}{2}\right)}{p}\int |\nabla |n|^\frac{p}{2}|^2 dx\le 0.
\end{equation*}
Therefore, we  complete the proof of the lemma.
\end{proof}

Finally, we will establish the decay rates for the director as follows.

\begin{lemm}\label{director-Decay-Lp}
Under the assumption \eqref{smallness}, then we have for any $p \ge 2$
\begin{equation}\label{director-Decay}
\|n\|_{L^p} \le C(1+t)^{-\frac{3}{2}\left(1-\frac{1}{p}\right)}
\end{equation}
\end{lemm}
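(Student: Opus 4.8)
The plan is to run Schonbek's Fourier splitting method on the differential inequality supplied by Lemma \ref{Lp}, combined with the uniform $L^1$ control from Lemma \ref{L1} and an induction on the exponent $p$. Setting $g=|n|^{\frac{p}{2}}$, we have $\int |n|^p dx=\|g\|_{L^2}^2$ and $\int |\nabla|n|^{\frac{p}{2}}|^2 dx=\|\nabla g\|_{L^2}^2$, so Lemma \ref{Lp} reads
\begin{equation*}
\frac{d}{dt}\|g\|_{L^2}^2+C_p\|\nabla g\|_{L^2}^2\le 0.
\end{equation*}
This is a pure dissipative structure; the whole difficulty is to convert dissipation into an algebraic decay rate, and Fourier splitting is the standard device for that.

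First I would introduce a time-dependent frequency cutoff $R(t)>0$ and estimate, via Plancherel,
\begin{equation*}
\|\nabla g\|_{L^2}^2=\int |\xi|^2|\widehat{g}|^2 d\xi\ge R(t)^2\left(\|g\|_{L^2}^2-\int_{|\xi|\le R(t)}|\widehat{g}|^2 d\xi\right),
\end{equation*}
which turns the dissipation inequality into
\begin{equation*}
\frac{d}{dt}\|g\|_{L^2}^2+C_pR(t)^2\|g\|_{L^2}^2\le C_pR(t)^2\int_{|\xi|\le R(t)}|\widehat{g}|^2 d\xi.
\end{equation*}
The low-frequency term is then controlled by the crude bound $|\widehat{g}(\xi)|\le\|g\|_{L^1}=\|n\|_{L^{p/2}}^{p/2}$, giving $\int_{|\xi|\le R}|\widehat g|^2 d\xi\lesssim R^3\|n\|_{L^{p/2}}^{p}$. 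Thus everything reduces to bounding the \emph{lower} norm $\|n\|_{L^{p/2}}$.

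This is where the induction enters. For the base case $p=2$ the low-frequency bound uses only $\|g\|_{L^1}=\|n\|_{L^1}\le C_0$ from Lemma \ref{L1}, which is self-contained. For $p\in[2,4]$ one has $\tfrac{p}{2}\in[1,2]$ and interpolates between the $L^1$ bound and the just-proved $L^2$ rate; for larger $p$ one invokes the statement at exponent $\tfrac{p}{2}$, reaching any finite $p$ after finitely many dyadic halvings. In each case the induction hypothesis yields $\|n\|_{L^{p/2}}^{p}\lesssim(1+t)^{-\frac{3}{2}(p-2)}$, so that
\begin{equation*}
\frac{d}{dt}\|g\|_{L^2}^2+C_pR^2\|g\|_{L^2}^2\lesssim R^5(1+t)^{-\frac{3}{2}(p-2)}.
\end{equation*}
Choosing $R(t)^2=\frac{\kappa}{1+t}$ with $\kappa$ large, multiplying by the integrating factor $(1+t)^{C_p\kappa}$ and integrating over $[0,t]$ produces $\|g\|_{L^2}^2=\|n\|_{L^p}^p\lesssim(1+t)^{\frac{3}{2}-\frac{3p}{2}}=(1+t)^{-\frac{3}{2}(p-1)}$, which is exactly the asserted rate $\|n\|_{L^p}\lesssim(1+t)^{-\frac{3}{2}(1-\frac{1}{p})}$ after taking the $p$-th root.

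The main obstacle is the coupling of scales: the low-frequency estimate forces the appearance of the lower norm $\|n\|_{L^{p/2}}$, so the decay at exponent $p$ cannot be obtained in isolation and the dyadic induction must be organized so that the time-weights propagate consistently. A secondary technical point is tracking the exponents through the integrating-factor computation to confirm that the worst term is precisely $(1+t)^{-\frac{3}{2}(p-1)}$ and that $\kappa$ can be taken large enough ($C_p\kappa\ge\frac{3p}{2}-\frac{3}{2}$) to absorb the homogeneous term. The supercritical nonlinearity $|\nabla n|^2(n+w_0)$ no longer causes trouble here, since it was already absorbed by the smallness of $\delta$ in deriving Lemma \ref{Lp}.
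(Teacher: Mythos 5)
Your proposal is correct, but it takes a genuinely different route from the paper. The paper follows the $L^p$ energy method of Kawashima--Nishibata--Nishikawa: it multiplies \eqref{Lp} directly by $(1+t)^\alpha$ and controls the resulting term $\alpha(1+t)^{\alpha-1}\|n\|_{L^p}^p$ by a generalized Nash/Gagliardo--Nirenberg interpolation $\|\,|n|^{p/2}\|_{L^2}^2\lesssim \|n\|_{L^1}^{\frac{2p}{3p-1}}\|\nabla|n|^{\frac{p}{2}}\|_{L^2}^{\frac{6(p-1)}{3p-1}}$ (note the Lebesgue index $2/p\le 1$ on the low side, which hits $\|n\|_{L^1}$ directly), then absorbs the gradient term by Young's inequality and integrates with $\alpha=1+\frac{3}{2}(p-1)$. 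This is one-shot in $p$: no induction, with Lemma \ref{L1} as the only input. Your Fourier splitting argument replaces that interpolation by the elementary bounds $\|\nabla g\|_{L^2}^2\ge R^2(\|g\|_{L^2}^2-\int_{|\xi|\le R}|\hat g|^2\,d\xi)$ and $|\hat g(\xi)|\le\|g\|_{L^1}=\|n\|_{L^{p/2}}^{p/2}$; but since $g=|n|^{p/2}$ is a nonlinear function of $n$, the low-frequency term can only reach $\|n\|_{L^{p/2}}$ rather than $\|n\|_{L^1}$, which is exactly why your dyadic induction is forced, together with the bridging estimate $\|n\|_{L^q}\le\|n\|_{L^1}^{\frac{2}{q}-1}\|n\|_{L^2}^{2(1-\frac{1}{q})}$ for $q=p/2\in(1,2)$ (this is precisely the paper's remark following the lemma, so that ingredient is legitimate). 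Your exponent bookkeeping checks out: the hypothesis at $p/2$ gives $\|n\|_{L^{p/2}}^p\lesssim(1+t)^{-\frac{3}{2}(p-2)}$, the source term is $(1+t)^{\frac12-\frac{3p}{2}}$ after inserting $R^2=\kappa/(1+t)$, and $C_p\kappa>\frac{3}{2}(p-1)$ (strictly, to avoid a logarithm at equality) yields $\|n\|_{L^p}^p\lesssim(1+t)^{-\frac{3}{2}(p-1)}$; finiteness of $\|n_0\|_{L^p}^p$ is free since $|n|\le 2$ pointwise and $n_0\in L^1$. What each approach buys: the paper's is shorter and uniform in $p$, at the price of invoking an interpolation inequality with sub-unit exponent that is not literally the stated Lemma 2.1 and must be taken from \cite{K-N-N}; yours uses only Plancherel and Hausdorff--Young-type trivialities and is stylistically consistent with the Fourier splitting employed in Lemmas 3.5--3.9, at the price of the induction scaffolding and $p$-dependent constants accumulating over finitely many halvings, which is harmless for fixed $p$.
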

\begin{proof}
Multiplying \eqref{Lp} by $(1+t)^{\alpha}$($\alpha$ is to defined below),
then we find
\begin{equation}\label{331}
\frac{d}{dt}\left[(1+t)^\alpha \int |n|^p dx\right]+C_p(1+t)^{\alpha}\int |\nabla |n|^\frac{p}{2}|^2 dx
\le \alpha(1+t)^{\alpha-1} \int |n|^p dx.
\end{equation}
By virtue of Sobolev and Young inequalities, it follows that
\begin{equation}\label{332}
\begin{aligned}
&\alpha(1+t)^{\alpha-1} \int |n|^p dx\\
&\le C(\alpha) (1+t)^{\alpha-1}\|n\|_{L^1}^{\frac{2p}{3p-1}}
              \|\nabla |n|^{\frac{p}{2}}\|_{L^2}^{\frac{6(p-1)}{3p-1}}\\
& =C(\alpha) (1+t)^{\frac{3(p-1)\alpha}{3p-1}} \|\nabla |n|^{\frac{p}{2}}\|_{L^2}^{\frac{6(p-1)}{3p-1}}
              (1+t)^{\frac{2\alpha}{3p-1}-1}\|n\|_{L^1}^{\frac{2p}{3p-1}}\\
&\le \varepsilon(1+t)^\alpha \|\nabla |n|^{\frac{p}{2}}\|_{L^2}^2
     +C(\varepsilon, \alpha)(1+t)^{\alpha-\frac{3p-1}{2}}\|n\|_{L^1}^p.
\end{aligned}
\end{equation}
Plugging \eqref{332} into \eqref{331} and choosing $\varepsilon$ small enough, we have
\begin{equation}\label{333}
\begin{aligned}
\frac{d}{dt}\left[(1+t)^\alpha \int |n|^p dx\right]+\frac{C_p}{2}(1+t)^{\alpha}\int |\nabla |n|^\frac{p}{2}|^2 dx
\le C(\alpha)(1+t)^{\alpha-\frac{3p-1}{2}},
\end{aligned}
\end{equation}
where we have used \eqref{L1}. Assuming $\alpha \neq \frac{3}{2}\left(p-1\right)$ and integrating
\eqref{333} over $[0, t]$, then we obtain
\begin{equation*}
\int |n|^p dx \le C(1+t)^{-\alpha} \int |n_0|^p dx+C(1+t)^{-\frac{3}{2}(p-1)}.
\end{equation*}
Choosing $\alpha$ as a number which is larger than $\frac{3}{2}(p-1)$(for example, choosing $\alpha=1+\frac{3}{2}(p-1)$),
then we deduce directly
\begin{equation*}
\|d-w_0\|_{L^p} \le C(1+t)^{-\frac{3}{2}\left(1-\frac{1}{p}\right)}.
\end{equation*}
Therefore, we complete the proof of the lemma.
\end{proof}
\begin{rema}
In fact, the application of interpolation inequality \eqref{GN} yields
\begin{equation*}
\begin{aligned}
\|d-w_0\|_{L^p}
&\le \|d-w_0\|_{L^1}^{\frac{2}{p}-1}\|d-w_0\|_{L^2}^{2\left(1-\frac{1}{p}\right)}\\
&\le C\|d-w_0\|_{L^1}^{\frac{2}{p}-1}(1+t)^{-\frac{3}{2}\left(1-\frac{1}{p}\right)}\\
&\le C(1+t)^{-\frac{3}{2}\left(1-\frac{1}{p}\right)},
\end{aligned}
\end{equation*}
for the case $1\le p <2.$
\end{rema}

After having the decay rates of director at hand, the convergence rates of velocity is easy to establish.
Just following the idea by Dai et al. \cite{Dai2}, we have the following lemma.

\begin{lemm}\label{velocity-Decay}
Under the assumption \eqref{smallness}, the velocity has the following time decay rate
\begin{equation}\label{velocity-Decay}
\|u\|_{L^2} \le C(1+t)^{-\frac{3}{4}}.
\end{equation}
\end{lemm}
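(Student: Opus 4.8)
The plan is to run the Fourier splitting method of Schonbek, as adapted by Dai and Schonbek \cite{Dai2}, on the combined low-order energy rather than on $\|u\|_{L^2}$ alone, so that the velocity and the gradient of the director are handled simultaneously. The natural starting point is the clean differential inequality furnished by Lemma \ref{inequality2} with $k=0$,
\[
\frac{d}{dt}\left(\|u\|_{L^2}^2+\|\nabla n\|_{L^2}^2\right)+\left(\|\nabla u\|_{L^2}^2+\|\nabla^2 n\|_{L^2}^2\right)\le 0 .
\]
Writing $E(t)=\|u\|_{L^2}^2+\|\nabla n\|_{L^2}^2$ and passing to Fourier variables, I would split $\mathbb{R}^3$ into the ball $S(t)=\{\xi:|\xi|^2\le \beta/(1+t)\}$ and its complement. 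On $S(t)^c$ one has $|\xi|^2\ge\beta/(1+t)$, so bounding the dissipation from below by $\frac{\beta}{1+t}\int_{S(t)^c}(|\hat u|^2+|\xi|^2|\hat n|^2)\,d\xi$ yields
\[
\frac{d}{dt}E(t)+\frac{\beta}{1+t}E(t)\le \frac{\beta}{1+t}\int_{S(t)}\left(|\hat u|^2+|\xi|^2|\hat n|^2\right)d\xi ,
\]
after which I multiply through by the integrating factor $(1+t)^\beta$ with $\beta>3/2$ fixed.

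The core of the argument is to estimate the low-frequency integral on the right by representing $u$ and $n$ through Duhamel's formula for the heat semigroup, applying the Leray projection to the velocity equation to remove the pressure (a multiplier bounded by $1$). Using incompressibility to recast the convection in divergence form, $u\cdot\nabla u=\mathrm{div}(u\otimes u)$ and $u\cdot\nabla n=\mathrm{div}(u\otimes n)$, these terms and the stress $\mathrm{div}(\nabla n\odot\nabla n)$ each acquire a factor $|\xi|$ on the Fourier side, while their $L^1$ norms are controlled by $E(s)$ and by the already-established director decay of Lemma \ref{director-Decay-Lp}. Together with $|\hat u_0|\le\|u_0\|_{L^1}$ and $|\hat n_0|\le\|n_0\|_{L^1}$, this produces pointwise bounds of the schematic form $|\hat u(\xi,t)|\lesssim \|u_0\|_{L^1}+|\xi|\int_0^t E(s)\,ds$ and $|\xi||\hat n(\xi,t)|\lesssim |\xi|\|n_0\|_{L^1}+|\xi|^2\!\int_0^t\!\|u\|_{L^2}\|n\|_{L^2}\,ds+|\xi|\!\int_0^t\!\|\nabla n\|_{L^2}^2\,ds$. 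Inserting these into $\int_{S(t)}$ and using $|S(t)|\sim(1+t)^{-3/2}$ and $\int_{S(t)}|\xi|^2\,d\xi\sim(1+t)^{-5/2}$, the dominant contribution is $\|u_0\|_{L^1}^2(1+t)^{-3/2}$, so the integrating-factor inequality integrates to $E(t)\le C(1+t)^{-3/2}$, that is $\|u\|_{L^2}\le C(1+t)^{-3/4}$. The history-quadratic pieces such as $(\int_0^t E\,ds)^2$ I would dispose of by a short bootstrap: the bare energy bound gives a crude rate, which when fed back improves $\int_0^t E\,ds$ and, after one further pass, returns the sharp exponent $-3/2$.

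The main obstacle is the supercritical term $|\nabla n|^2(n+w_0)$ in the director equation. Unlike every other nonlinearity it carries no divergence structure, so in the Duhamel representation of $\hat n$ it contributes a term without a compensating factor of $|\xi|$. The resolution is to exploit the unit-length constraint $|n+w_0|=1$ recorded in \eqref{equal1}, which collapses its $L^1$ norm to $\|\nabla n\|_{L^2}^2$, together with the time-integrability $\int_0^\infty\|\nabla n\|_{L^2}^2\,dt<\infty$ supplied by the global energy bound \eqref{1.5}; this integrability is precisely what prevents the offending term from degrading the rate. Since the lemma asserts only the velocity bound, the companion estimate $\|\nabla n\|_{L^2}\le C(1+t)^{-3/4}$ emerging along the way is a mere byproduct, its sharper rate $(1+t)^{-5/4}$ being recovered separately through Lemma \ref{motivated}.
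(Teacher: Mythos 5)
Your proposal is correct and takes essentially the approach the paper intends: the paper omits this proof, saying only that it follows the Fourier splitting idea of Dai and Schonbek \cite{Dai2}, and your argument --- the $k=0$ case of Lemma \ref{inequality2}, splitting over the time-dependent ball $S(t)$, Duhamel/Leray-projection bounds on low frequencies using the divergence structure of $u\cdot\nabla u$, $u\cdot\nabla n$ and ${\rm div}(\nabla n\odot\nabla n)$, the collapse $\||\nabla n|^2(n+w_0)\|_{L^1}=\|\nabla n\|_{L^2}^2$ via \eqref{equal1} together with the time-integrability from \eqref{1.5}, and a two-pass bootstrap on $\int_0^t E\,ds$ --- is precisely that method and the exponent bookkeeping checks out. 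One peripheral slip only: in the paper the sharper rate $\|\nabla n\|_{L^2}\le C(1+t)^{-5/4}$ is proved by Fourier splitting in Lemma \ref{director-decay2}, with Lemma \ref{motivated} serving to enhance the top-order derivative of the director, but this does not affect your proof of the stated lemma.
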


\subsection{Decay rates for higher-order spatial derivatives of velocity and direction field}

\quad In this subsection, we will establish the time convergence rates for the higher-order spatial derivatives
of velocity and director. First of all, we have the following time decay rates for the first-order spatial
derivatives of director.
\begin{lemm}\label{director-decay2}
Under the assumptions of Theorem \ref{rates1}, then the director has the following time decay rate
\begin{equation}\label{director-decay2}
\|\nabla n\|_{L^2} \le C (1+t)^{-\frac{5}{4}}.
\end{equation}
\end{lemm}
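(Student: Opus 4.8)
The plan is to derive a closed, velocity-decoupled energy inequality for $\|\nabla n\|_{L^2}$ and then run the Fourier splitting method, feeding in the uniform $L^1$ bound of Lemma \ref{L1} to produce the improved rate. First I would differentiate $\eqref{equations2}_3$ once in space, test against $\nabla n$ and integrate by parts, obtaining
\[
\frac{1}{2}\frac{d}{dt}\|\nabla n\|_{L^2}^2+\|\nabla^2 n\|_{L^2}^2 = I_3+I_4,
\]
with $I_3=-\int\nabla(u\cdot\nabla n)\cdot\nabla n\,dx$ and $I_4=\int\nabla(|\nabla n|^2(n+w_0))\cdot\nabla n\,dx$, which are exactly the director terms of Lemma \ref{inequality2} at $k=0$. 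The key observation is that both can be absorbed into the dissipation $\|\nabla^2 n\|_{L^2}^2$ using only smallness, so that no velocity-gradient source survives. For $I_3$ I would integrate by parts and use ${\rm div}\,u=0$ to rewrite it as $\int(u\cdot\nabla n)\cdot\Delta n\,dx$, so that $|I_3|\lesssim\|u\|_{L^3}\|\nabla n\|_{L^6}\|\nabla^2 n\|_{L^2}\lesssim\|u\|_{L^3}\|\nabla^2 n\|_{L^2}^2$; since $\|u\|_{L^3}\lesssim\|u\|_{L^2}^{1/2}\|\nabla u\|_{L^2}^{1/2}\lesssim\delta^{1/2}$ by \eqref{smallness}, this is absorbable. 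The term $I_4$ is bounded by $\delta\|\nabla^2 n\|_{L^2}^2$ exactly as in \eqref{238}. Taking $\delta$ small then yields the decoupled inequality
\[
\frac{d}{dt}\|\nabla n\|_{L^2}^2+c\,\|\nabla^2 n\|_{L^2}^2\le 0
\]
for some $c>0$.

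Next I would apply the Fourier splitting method. Writing $\|\nabla n\|_{L^2}^2=\int|\xi|^2|\hat n|^2\,d\xi$ and $\|\nabla^2 n\|_{L^2}^2=\int|\xi|^4|\hat n|^2\,d\xi$, and introducing the time-dependent ball $S(t)=\{\xi:|\xi|^2\le k_0/(1+t)\}$ for a constant $k_0$ to be fixed, the dissipation bounds the energy from below outside $S(t)$, giving
\[
\frac{d}{dt}\|\nabla n\|_{L^2}^2+\frac{ck_0}{1+t}\|\nabla n\|_{L^2}^2\le\frac{ck_0}{1+t}\int_{S(t)}|\xi|^2|\hat n|^2\,d\xi.
\]
The decisive step is to control the low-frequency integral by the uniform $L^1$ bound: Lemma \ref{L1} gives $\|\hat n(t)\|_{L^\infty}\le\|n(t)\|_{L^1}\le C_0$, hence $\int_{S(t)}|\xi|^2|\hat n|^2\,d\xi\le C_0^2\int_{|\xi|\le (k_0/(1+t))^{1/2}}|\xi|^2\,d\xi\lesssim(1+t)^{-5/2}$, so the right-hand side is $\lesssim(1+t)^{-7/2}$.

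Finally I would choose $k_0$ so that $ck_0>5/2$, multiply by the integrating factor $(1+t)^{ck_0}$ and integrate over $[0,t]$:
\[
(1+t)^{ck_0}\|\nabla n\|_{L^2}^2\le\|\nabla n_0\|_{L^2}^2+C\int_0^t(1+s)^{ck_0-7/2}\,ds\lesssim(1+t)^{ck_0-5/2},
\]
which yields $\|\nabla n\|_{L^2}^2\lesssim(1+t)^{-5/2}$, i.e. the claimed bound $\|\nabla n\|_{L^2}\le C(1+t)^{-5/4}$. The main obstacle is the velocity-coupling term $I_3$: since the velocity only decays like $(1+t)^{-3/4}$ in $L^2$ and a gradient decay for $u$ is not yet available, a naive coupled estimate (such as applying Fourier splitting directly to the combined quantity $\|u\|_{L^2}^2+\|\nabla n\|_{L^2}^2$ of Lemma \ref{inequality2}) would be polluted by the slower velocity rate and only return $(1+t)^{-3/4}$. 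The resolution is precisely to absorb $I_3$ entirely into the director dissipation by smallness, decoupling the director energy from the velocity, so that the $L^1$-driven Fourier splitting can deliver the faster rate $5/4$.
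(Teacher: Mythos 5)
Your proposal is correct, and its skeleton coincides with the paper's proof: the same once-differentiated energy identity, the same absorption of the two nonlinear terms into the dissipation by smallness (the paper's estimates \eqref{352}--\eqref{353}, yielding the decoupled inequality \eqref{354}), and then Fourier splitting over the time-dependent ball. The one genuine difference is how you control the low-frequency contribution. The paper keeps everything at the level of Plancherel: it bounds $\int_{S_0(t)}|\xi|^2|\hat n|^2\,d\xi$ by $\frac{R}{1+t}\|n\|_{L^2}^2$ and then invokes the already-established decay $\|n\|_{L^2}^2\lesssim(1+t)^{-3/2}$ from Lemma \ref{director-Decay-Lp} (the $L^p$ energy method of Kawashima et al.), arriving at the forcing $(1+t)^{-7/2}$ in \eqref{356}. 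You instead use the pointwise bound $\|\hat n\|_{L^\infty}\le\|n\|_{L^1}\le C_0$ from Lemma \ref{L1} and integrate $|\xi|^2$ over the ball directly, which gives the same $(1+t)^{-7/2}$ forcing and hence the same rate. Your route is marginally more self-contained for this particular lemma, since it needs only the uniform $L^1$ bound and not the $L^2$ decay of $n$ (i.e.\ it bypasses Lemmas \ref{Lp} and \ref{director-Decay-Lp} here, in the classical Schonbek spirit); the paper's route avoids any pointwise Fourier-transform estimate and reuses the $L^2$ decay it must prove anyway for \eqref{1.6} at $m=0$, so neither saves work globally. Your closing diagnosis is also accurate: coupling $\|\nabla n\|_{L^2}^2$ to $\|u\|_{L^2}^2$ (as in Lemma \ref{inequality2} with $k=0$) would contaminate the rate with the slower $(1+t)^{-3/2}$ velocity decay, and the decoupling via absorption of $I_3$ is exactly what the paper exploits.
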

\begin{proof}
Taking spatial derivatives to $\eqref{equations2}_3$, multiplying the resulting identity by $\nabla n$
and integrating over $\mathbb{R}^3$, then we have
\begin{equation}\label{351}
\frac{1}{2}\frac{d}{dt}\int |\nabla n|^2 dx+\int |\nabla^2 n|^2 dx
=\int \nabla (-u \cdot \nabla n+|\nabla n|^2(n+w_0)) \nabla ndx.
\end{equation}
Integrating by parts and applying the Holder and Sobolev inequalities, we have
\begin{equation}\label{352}
\int u \cdot \nabla n \nabla^2 ndx
\le \|u\|_{L^3}\|\nabla n\|_{L^6}\|\nabla^2 n\|_{L^2}
\lesssim \delta \|\nabla^2 n\|_{L^2}^2.
\end{equation}
On the other hand, we have
\begin{equation}\label{353}
-\int |\nabla n|^2(n+w_0)\nabla^2 n dx
\le \|\nabla n\|_{L^3}\|\nabla n\|_{L^6}\|\nabla^2 n\|_{L^2}
\lesssim \delta \|\nabla^2 n\|_{L^2}^2.
\end{equation}
By virtue of the smallness of $\delta$, substituting \eqref{352} and \eqref{353} into \eqref{351}, we have
\begin{equation}\label{354}
\frac{d}{dt}\int |\nabla n|^2 dx+\int |\nabla^2 n|^2 dx\le 0.
\end{equation}
Denoting the time sphere $S_0(t)$ $($see \cite{Schonbek}$)$ as
\begin{equation*}
S_0(t)=\left\{\xi \in \mathbb{R}^3 \left||\xi|\le \left(\frac{R}{1+t}\right)^{\frac{1}{2}}\right.\right\},
\end{equation*}
where $R$ is a constant defined below. By virtue of Parseval identity , it arrives at
\begin{equation*}
\begin{aligned}
\int |\nabla^2 n|^2 dx
&\ge \int_{\mathbb{R}^3 \backslash S_0(t)} |\xi|^{4}|\hat{n}|^2 dx\\
&\ge \frac{R}{1+t}\int_{\mathbb{R}^3 \backslash S_0(t)} |\xi|^{2}|\hat{n}|^2 d\xi\\
&=\frac{R}{1+t}\int_{\mathbb{R}^3 } |\xi|^{2}|\hat{n}|^2 d\xi
  -\frac{R}{1+t}\int_{S_0(t)} |\xi|^{2}|\hat{n}|^2 d\xi\\
&\ge \frac{R}{1+t}\int_{\mathbb{R}^3 } |\xi|^{2}|\hat{n}|^2 d\xi
  -\frac{R^2}{(1+t)^2}\int_{\mathbb{R}^3} |\hat{n}|^2 d\xi.\\
\end{aligned}
\end{equation*}
Hence, we have
\begin{equation}\label{FSM}
\int |\nabla^2 n|^2 dx \ge \frac{R}{1+t}\int_{\mathbb{R}^3} |\nabla n|^2 dx
  -\frac{R^2}{(1+t)^2}\int_{\mathbb{R}^3} |n|^2 dx.
\end{equation}
Combining \eqref{354} and \eqref{FSM}, it follows
\begin{equation*}
\frac{d}{dt}\int |\nabla n|^2 dx+\frac{R}{1+t}\int_{\mathbb{R}^3 } |\nabla n|^2 dx
\le \frac{R^2}{(1+t)^2}\int_{\mathbb{R}^3} |n|^2 dx,
\end{equation*}
or equivalently, we have
\begin{equation}\label{356}
\frac{d}{dt}\int |\nabla n|^2 dx+\frac{R}{1+t}\int_{\mathbb{R}^3 } |\nabla n|^2 dx
\lesssim (1+t)^{-\frac{7}{2}}.
\end{equation}
Choosing $R=3$ and multiplying \eqref{356} by $(1+t)^3$, we get
\begin{equation}\label{357}
\frac{d}{dt}\left[(1+t)^3\|\nabla n\|_{L^2}^2\right]\le C(1+t)^{-\frac{1}{2}}.
\end{equation}
Integrating \eqref{357} over $[0, t]$, then we have
\begin{equation*}
\|\nabla n\|_{L^2}^2 \le C(1+t)^{-\frac{5}{2}}.
\end{equation*}
Therefore, we complete the proof of the lemma.
\end{proof}

\begin{rema}
Combining the results \eqref{director-Decay} and \eqref{director-decay2}, it follows immediately
\begin{equation}\label{velocity-director-decay1}
\|u\|_{L^2}^2+\|\nabla n\|_{L^2}^2\le C(1+t)^{-\frac{3}{2}}.
\end{equation}
\end{rema}

Next, we will establish the time decay rates for the higher-order spatial derivatives of velocity
and director in the following lemma.

\begin{lemm}\label{velocity-rates}
Under the assumptions of Theorem \ref{rates1}, the global solution $(u, n)$ of problem \eqref{equations2}-\eqref{infinity-decay2} satisfies
\begin{equation}\label{velocity-director-decay2}
\|\nabla^k u\|_{L^2}^2+\|\nabla^{k+1} n\|_{L^2}^2 \le C (1+t)^{-\frac{3}{2}-k},
\end{equation}
for $k=0,1,...,N$.
\end{lemm}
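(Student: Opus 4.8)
The plan is to argue by induction on $k$, with the energy--dissipation inequality \eqref{inequality2} of Lemma \ref{inequality2} as the engine and the Fourier splitting method of Lemma \ref{director-decay2} as the mechanism that converts dissipation into decay. Abbreviate $\mathcal{E}_k(t)=\|\nabla^k u\|_{L^2}^2+\|\nabla^{k+1}n\|_{L^2}^2$ and $\mathcal{D}_k(t)=\|\nabla^{k+1}u\|_{L^2}^2+\|\nabla^{k+2}n\|_{L^2}^2$, so that Lemma \ref{inequality2} reads $\frac{d}{dt}\mathcal{E}_k+\mathcal{D}_k\le 0$. In particular every nonlinear estimate has already been absorbed into this inequality, and the remaining work is purely linear. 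The base case $k=0$ is exactly \eqref{velocity-director-decay1}, and I take as inductive hypothesis that the statement holds at level $k-1$, i.e. $\mathcal{E}_{k-1}(t)\le C(1+t)^{-\frac{1}{2}-k}$.

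For the inductive step I would apply the Fourier splitting to $\mathcal{D}_k$. Using Parseval's identity together with the time ball $S_0(t)=\{\xi:|\xi|\le (R/(1+t))^{1/2}\}$ from Lemma \ref{director-decay2} and the bound $|\xi|^2\ge R/(1+t)$ on its complement, exactly as in \eqref{FSM}, one obtains termwise
\[
\mathcal{D}_k \ge \frac{R}{1+t}\,\mathcal{E}_k-\frac{R}{1+t}\int_{S_0(t)}\left(|\xi|^{2k}|\hat u|^2+|\xi|^{2(k+1)}|\hat n|^2\right)d\xi.
\]
On $S_0(t)$ one has $|\xi|^2\le R/(1+t)$, so the low-frequency remainder drops to lower-order norms,
\[
\int_{S_0(t)}|\xi|^{2k}|\hat u|^2\,d\xi\le \frac{R}{1+t}\|\nabla^{k-1}u\|_{L^2}^2,\qquad \int_{S_0(t)}|\xi|^{2(k+1)}|\hat n|^2\,d\xi\le \frac{R}{1+t}\|\nabla^{k}n\|_{L^2}^2,
\]
both of which are dominated by $\mathcal{E}_{k-1}$; hence the whole remainder is $O\big((1+t)^{-2}\mathcal{E}_{k-1}\big)=O\big((1+t)^{-k-\frac{5}{2}}\big)$ by the inductive hypothesis. (Equivalently, bounding $|\xi|^{2k}\le (R/(1+t))^{k}$ on $S_0(t)$ reduces the remainder directly to $\|u\|_{L^2}^2$ and $\|n\|_{L^2}^2$, whose decay is supplied by Lemma \ref{velocity-Decay} and Lemma \ref{director-Decay-Lp}.) In either case one arrives at
\[
\frac{d}{dt}\mathcal{E}_k+\frac{R}{1+t}\,\mathcal{E}_k\le C(1+t)^{-k-\frac{5}{2}}.
\]

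Then I would close with an integrating factor. Keeping $R$ free until this point, I fix any $R>k+\frac{3}{2}$ and multiply by $(1+t)^R$ to get $\frac{d}{dt}\big[(1+t)^R\mathcal{E}_k\big]\le C(1+t)^{R-k-\frac{5}{2}}$; since $R-k-\frac{5}{2}>-1$, integrating over $[0,t]$ yields $(1+t)^R\mathcal{E}_k\le \mathcal{E}_k(0)+C(1+t)^{R-k-\frac{3}{2}}$, and dividing by $(1+t)^R$ gives $\mathcal{E}_k(t)\le C(1+t)^{-k-\frac{3}{2}}$. This is precisely \eqref{velocity-director-decay2} at level $k$ and completes the induction.

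The step that needs genuine care is the Fourier-splitting bookkeeping: one must check that the remainder obtained when $\mathcal{D}_k$ is replaced by $\frac{R}{1+t}\mathcal{E}_k$ decays strictly faster, like $(1+t)^{-k-\frac{5}{2}}$, rather than merely like $(1+t)^{-1}\mathcal{E}_k$; the extra power is exactly what is gained from $|\xi|^2\le R/(1+t)$ on $S_0(t)$ combined with the one-level-lower decay. Without that gain the integrating-factor argument would not improve the rate. By contrast, no new nonlinear analysis is required, since the supercritical term $|\nabla n|^2(n+w_0)$ has already been handled inside Lemma \ref{inequality2}, and the mild $k$-dependence of the threshold $R>k+\frac{3}{2}$ is harmless because $R$ may be chosen afresh at each induction level.
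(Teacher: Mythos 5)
Your proposal is correct and follows essentially the same route as the paper: induction on $k$ with the base case \eqref{velocity-director-decay1}, the linear energy--dissipation inequality of Lemma \ref{inequality2} at level $k$, and the Fourier splitting of \eqref{FSM} to trade the dissipation for $\frac{R}{1+t}\mathcal{E}_k$ plus a low-frequency remainder controlled by the inductive hypothesis, closed by an integrating factor. The only differences are cosmetic: the paper fixes $R=l+3$ and writes the remainder as $\frac{R^2}{(1+t)^2}\bigl(\|\nabla^l u\|_{L^2}^2+\|\nabla^{l+1}n\|_{L^2}^2\bigr)$, which is exactly your $O\bigl((1+t)^{-2}\mathcal{E}_{k-1}\bigr)$ bound with a specific choice of $R$.
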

\begin{proof}
We will give the proof for \eqref{velocity-director-decay2} by taking the strategy of induction.
In fact, the inequality \eqref{velocity-director-decay1} implies the equality
\eqref{velocity-director-decay2} holds on for the case $k=0$. Now, we suppose the inequality
\eqref{velocity-director-decay2}  holds on for the case $k=l$, i.e.,
\begin{equation}\label{361}
\|\nabla^l u\|_{L^2}^2+\|\nabla^{l+1} n\|_{L^2}^2 \le C (1+t)^{-\frac{3}{2}-l},
\end{equation}
where $l=0, 1, ..., N-1$.
In the sequel, we focus on verifying \eqref{velocity-director-decay2} holds on for the case $k=l+1$.
In fact, taking $k=l+1$ in the inequality \eqref{inequality2}, it follows immediately
\begin{equation}\label{362}
\frac{d}{dt}\int (|\nabla^{l+1} u|^2+|\nabla^{l+2} n|^2)dx+\int(|\nabla^{l+2} u|^2+|\nabla^{l+3} n|^2) dx\le 0.
\end{equation}
Similar to \eqref{FSM}, we have
\begin{equation}\label{363}
\int |\nabla^{l+2} u|^2 dx \ge \frac{R}{1+t}\int_{\mathbb{R}^3} |\nabla^{l+1} u|^2 dx
  -\frac{R^2}{(1+t)^2}\int_{\mathbb{R}^3} |\nabla^l u|^2 dx.
\end{equation}
and
\begin{equation}\label{364}
\int |\nabla^{l+3} n|^2 dx \ge \frac{R}{1+t}\int_{\mathbb{R}^3} |\nabla^{l+2} n|^2 dx
  -\frac{R^2}{(1+t)^2}\int_{\mathbb{R}^3} |\nabla^{l+1} n|^2 dx.
\end{equation}
Inserting \eqref{363} and \eqref{364} into \eqref{362} and applying the convergence rates \eqref{361}, we get
\begin{equation}\label{365}
\begin{aligned}
&\frac{d}{dt}\int (|\nabla^{l+1} u|^2+|\nabla^{l+2} n|^2) dx
+\frac{R}{1+t}\int (|\nabla^{l+1} u|^2+|\nabla^{l+2} n|^2) dx\\
&\lesssim \frac{R^2}{(1+t)^2}\int (|\nabla^l u|^2+|\nabla^{l+1} n|^2) dx\\
&\lesssim (1+t)^{-\frac{7}{2}-l}.
\end{aligned}
\end{equation}
Choosing $R=l+3$ and multiplying \eqref{365} by $(1+t)^{l+3}$, it arrives at
\begin{equation}\label{366}
\frac{d}{dt}\left[(1+t)^{l+3}\|\nabla^{l+1}(u, \nabla n)\|_{L^2}^2\right]\le C(1+t)^{-\frac{1}{2}}.
\end{equation}
Integrating \eqref{366} over $[0, t]$, then we get
\begin{equation*}
\|\nabla^{l+1}(u, \nabla n)\|_{L^2}^2 \le C(1+t)^{-\frac{5}{2}-l}.
\end{equation*}
Therefore, by the general step of induction, we complete the proof of lemma.
\end{proof}

Next, we will enhance the time convergence rates for the direction field. This improvement is motivated
by the following lemma.
\begin{lemm}\label{motivated}
For some smooth function $F(x,t)$, suppose the function $v(x,t)$ is the solution of heat equation
\begin{equation}\label{motivated-equation}
v_t-\Delta v=F,
\end{equation}
for $(x,t)\in \mathbb{R}^3 \times R^+$ with smooth initial data $v(x,0)=v_0$. If the function $F$ and
the solution $v$ have the time decay rates
\begin{equation}\label{motivated-decay}
\|\nabla^k v\|_{L^2}^2 \le C(1+t)^{-(\frac{3}{2}+k)}, \quad  \|\nabla^k F\|_{L^2}^2 \le C(1+t)^{-\alpha}
\end{equation}
where $\alpha\ge k+\frac{7}{2}$. Then, we have the following decay rate
\begin{equation*}
\|\nabla^{k+1} v(t)\|_{L^2}^2 \le C(1+t)^{-(k+\frac{5}{2})}.
\end{equation*}
\end{lemm}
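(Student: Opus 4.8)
The plan is to run exactly the Fourier splitting scheme already used for Lemma \ref{director-decay2} and Lemma \ref{velocity-rates}, now carried out at the level of the $(k+1)$-th derivative, with the forcing term $F$ treated by a single integration by parts so that only the hypothesis on $\nabla^k F$ is needed. First I would apply $\nabla^{k+1}$ to \eqref{motivated-equation}, multiply by $\nabla^{k+1} v$ and integrate over $\mathbb{R}^3$ to obtain the energy identity
\begin{equation*}
\frac{1}{2}\frac{d}{dt}\|\nabla^{k+1} v\|_{L^2}^2+\|\nabla^{k+2} v\|_{L^2}^2
=\int \nabla^{k+1} F\cdot \nabla^{k+1} v\, dx
=-\int \nabla^{k} F\cdot \nabla^{k+2} v\, dx,
\end{equation*}
the last equality being integration by parts. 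Moving the derivative onto $v$ is the point: the hypothesis \eqref{motivated-decay} controls $\nabla^k F$, not $\nabla^{k+1} F$. By Cauchy--Schwarz and Young's inequality the right-hand side is bounded by $\frac{1}{2}\|\nabla^{k+2}v\|_{L^2}^2+\frac{1}{2}\|\nabla^k F\|_{L^2}^2$, and absorbing the first term into the dissipation yields
\begin{equation*}
\frac{d}{dt}\|\nabla^{k+1} v\|_{L^2}^2+\|\nabla^{k+2} v\|_{L^2}^2
\le \|\nabla^k F\|_{L^2}^2\le C(1+t)^{-\alpha}\le C(1+t)^{-(k+\frac{7}{2})},
\end{equation*}
where I used $\alpha\ge k+\frac{7}{2}$.

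Next I would invoke the Fourier splitting estimate exactly as in \eqref{FSM}, \eqref{363} and \eqref{364}. With the sphere $S_0(t)=\{\xi:|\xi|\le (R/(1+t))^{1/2}\}$, Parseval's identity together with splitting the frequency integral into $S_0(t)$ and its complement gives
\begin{equation*}
\|\nabla^{k+2} v\|_{L^2}^2\ge \frac{R}{1+t}\|\nabla^{k+1} v\|_{L^2}^2-\frac{R^2}{(1+t)^2}\|\nabla^k v\|_{L^2}^2.
\end{equation*}
Substituting this into the differential inequality and using the assumed rate $\|\nabla^k v\|_{L^2}^2\le C(1+t)^{-(3/2+k)}$ to bound $\frac{R^2}{(1+t)^2}\|\nabla^k v\|_{L^2}^2\le C(1+t)^{-(k+7/2)}$, both source terms have the common size $(1+t)^{-(k+7/2)}$, so
\begin{equation*}
\frac{d}{dt}\|\nabla^{k+1} v\|_{L^2}^2+\frac{R}{1+t}\|\nabla^{k+1} v\|_{L^2}^2\le C(1+t)^{-(k+\frac{7}{2})}.
\end{equation*}

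Finally I would close with the integrating-factor trick used in \eqref{366}: choosing $R=k+3$ and multiplying by $(1+t)^{k+3}$ turns the left-hand side into $\frac{d}{dt}[(1+t)^{k+3}\|\nabla^{k+1}v\|_{L^2}^2]$ and the right-hand side into $C(1+t)^{-1/2}$; integrating over $[0,t]$ and using that $\|\nabla^{k+1}v_0\|_{L^2}$ is finite by smoothness of the data gives $(1+t)^{k+3}\|\nabla^{k+1}v\|_{L^2}^2\le C(1+t)^{1/2}$, that is, the claimed bound $\|\nabla^{k+1}v(t)\|_{L^2}^2\le C(1+t)^{-(k+5/2)}$. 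I do not expect a serious obstacle, since the scheme is identical to the earlier lemmas; the only genuinely load-bearing step is the integration by parts on the forcing term, which is precisely what makes the hypothesis $\alpha\ge k+\frac{7}{2}$ on $\nabla^k F$ (rather than a stronger hypothesis on $\nabla^{k+1}F$) exactly sufficient to match the $(1+t)^{-(k+7/2)}$ produced by the lower-order remainder $\frac{R^2}{(1+t)^2}\|\nabla^k v\|_{L^2}^2$.
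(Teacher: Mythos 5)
Your proposal is correct and follows essentially the same route as the paper's own proof: the energy identity with one integration by parts moving a derivative from $F$ onto $v$ (so that only $\|\nabla^k F\|_{L^2}$ enters, exactly as the paper's inequality \eqref{371} implicitly requires), the Fourier splitting bound as in \eqref{FSM}, and the integrating factor $(1+t)^{k+3}$ with $R=k+3$. Your remark correctly identifies the integration by parts on the forcing as the step that makes the hypothesis $\alpha\ge k+\tfrac{7}{2}$ on $\nabla^k F$ sufficient, which is precisely how the paper proceeds.
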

\begin{proof}
Taking $(k+1)-$th spatial derivative on both hand sides of \eqref{motivated-equation},
multiplying by $\nabla^{k+1} v$ and integrating over $\mathbb{R}^3$, we obtain
\begin{equation*}
\frac{1}{2}\frac{d}{dt}\int |\nabla^{k+1} v|^2 dx+\int |\nabla^{k+2}v|^2 dx
\le \frac{1}{2}\int |\nabla^{k} F|^2 dx +\frac{1}{2}\int |\nabla^{k+2} v|^2 dx,
\end{equation*}
which implies
\begin{equation}\label{371}
\frac{d}{dt}\int |\nabla^{k+1} v|^2 dx+\int |\nabla^{k+2}v|^2 dx\le \int |\nabla^{k} F|^2 dx.
\end{equation}
Similar to \eqref{FSM}, it follows immediately
\begin{equation*}
\int |\nabla^{k+2}v|^2 dx
\ge \frac{k+3}{1+t}\int |\nabla^{k+1}v|^2 dx-\left(\frac{k+3}{1+t}\right)^2 \int |\nabla^{k}v|^2 dx,
\end{equation*}
which, together with \eqref{motivated-decay} and  \eqref{371}, yields
\begin{equation}\label{372}
\begin{aligned}
&\frac{d}{dt}\int |\nabla^{k+1} v|^2 dx+\frac{k+3}{1+t}\int |\nabla^{k+1}v|^2 dx\\
&\le \left(\frac{k+3}{1+t}\right)^2 \int |\nabla^{k}v|^2 dx+\int |\nabla^{k} F|^2 dx\\
&\le C(1+t)^{-(k+\frac{7}{2})}.
\end{aligned}
\end{equation}
Multiplying \eqref{372} by $(1+t)^{k+3}$ and integrating over $[0, t]$, we get
\begin{equation*}
\|\nabla^{k+1} v(t)\|_{L^2}^2 \le (1+t)^{-k-3}\left[\|\nabla^{k+1} v_0\|_{L^2}^2+C(1+t)^{\frac{1}{2}}\right],
\end{equation*}
or equivalently
\begin{equation*}
\|\nabla^{k+1} v(t)\|_{L^2}^2 \le C(1+t)^{-(k+\frac{5}{2})}.
\end{equation*}
Therefore, we complete the proof of lemma.
\end{proof}

\begin{lemm}\label{director-rates}
Under the assumptions of Theorem \ref{rates1}, the director has the following time decay rates
\begin{equation}\label{director-decay3}
\|\nabla^{k} n\|_{L^2}^2 \le C (1+t)^{-\frac{3}{2}-k},
\end{equation}
where $k=1,....,N+1$.
\end{lemm}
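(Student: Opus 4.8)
The plan is to view $n$ as a solution of an inhomogeneous heat equation and then invoke Lemma \ref{motivated} to gain one extra power of $(1+t)$ in the decay rate, proceeding by induction on $k$. Rewriting the third equation of \eqref{equations2} as
\begin{equation*}
n_t-\Delta n=F,\qquad F:=-u\cdot\nabla n+|\nabla n|^2(n+w_0),
\end{equation*}
I first record the base case: Lemma \ref{director-decay2} already gives $\|\nabla n\|_{L^2}^2\lesssim(1+t)^{-5/2}=(1+t)^{-3/2-1}$, which is \eqref{director-decay3} for $k=1$. The point of the present lemma is that Lemma \ref{velocity-rates} only yields the weaker bound $\|\nabla^{k}n\|_{L^2}^2\lesssim(1+t)^{-1/2-k}$, so what must be gained is exactly one additional power of $(1+t)$, which is precisely the improvement furnished by Lemma \ref{motivated}.

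For the inductive step, suppose \eqref{director-decay3} holds at order $k$ for some $1\le k\le N$, i.e. $\|\nabla^k n\|_{L^2}^2\lesssim(1+t)^{-(3/2+k)}$. I would apply Lemma \ref{motivated} with $v=n$ at this index $k$: its first hypothesis on $v$ is precisely the inductive assumption, and its conclusion reads $\|\nabla^{k+1}n\|_{L^2}^2\lesssim(1+t)^{-(k+5/2)}=(1+t)^{-3/2-(k+1)}$, which is \eqref{director-decay3} at order $k+1$. Thus the whole induction reduces to verifying the second hypothesis of Lemma \ref{motivated}, namely the source bound $\|\nabla^k F\|_{L^2}^2\lesssim(1+t)^{-\alpha}$ with $\alpha\ge k+7/2$.

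Establishing this source estimate is the main obstacle, and it is where the decay rates of Lemma \ref{velocity-rates} together with the already-established director rates are consumed. I would split $F=F_1+F_2$ with $F_1=-u\cdot\nabla n$ and $F_2=|\nabla n|^2(n+w_0)$. For the convection part, expanding $\nabla^k F_1$ by the Leibniz formula gives terms $\nabla^l u\,\nabla^{k+1-l}n$; the slowest-decaying one is $u\cdot\nabla^{k+1}n$, which I bound by $\|u\|_{L^\infty}\|\nabla^{k+1}n\|_{L^2}\lesssim(1+t)^{-3/2}(1+t)^{-1/4-(k+1)/2}=(1+t)^{-7/4-(k+1)/2}$, using the Agmon inequality $\|u\|_{L^\infty}\lesssim\|\nabla u\|_{L^2}^{1/2}\|\nabla^2 u\|_{L^2}^{1/2}$ together with the weaker rate for $\nabla^{k+1}n$ from Lemma \ref{velocity-rates}; squaring gives $(1+t)^{-9/2-k}$, which already beats the threshold $k+7/2$. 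The remaining Leibniz terms are treated identically by distributing $L^p$ norms through H\"older and then interpolating with \eqref{GN}, all with strictly faster decay. For the supercritical cubic term $F_2$, I would apply the Leibniz rule twice exactly as in \eqref{2320}, writing $\nabla^k F_2$ as a sum of triple products $\nabla^{a+1}n\,\nabla^{b+1}n\,\nabla^{c}(n+w_0)$ with $a+b+c=k$, and estimate each by H\"older (in $L^4\times L^4\times L^\infty$, using $|n+w_0|=1$) and \eqref{GN}; being cubic, $F_2$ decays even faster than $F_1$, so $\alpha\ge k+7/2$ holds with room to spare. With the source bound in hand, Lemma \ref{motivated} closes the induction and \eqref{director-decay3} follows for all $k=1,\dots,N+1$.
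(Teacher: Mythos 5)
Your overall strategy --- gain one extra power of $(1+t)$ by viewing $\eqref{equations2}_3$ as a heat equation with source $F$ and inducting on $k$ --- is exactly the philosophy the paper attributes to Lemma \ref{motivated}, and your decay arithmetic at each step is consistent with the threshold $\alpha\ge k+\frac{7}{2}$. But the paper does \emph{not} literally apply Lemma \ref{motivated}, and the reason is precisely where your proof breaks: invoking that lemma requires a \emph{pointwise-in-time} bound on $\|\nabla^k F(t)\|_{L^2}$, and at the top step of the induction ($k=N$, producing $\nabla^{N+1}n$) this norm is not controlled by the solution's regularity when $N=1$. There you only have $u\in H^1$, $n\in H^{2}$, yet your estimate of the worst convection term $u\cdot\nabla^{2}n$ uses the Agmon bound $\|u\|_{L^\infty}\lesssim\|\nabla u\|_{L^2}^{1/2}\|\nabla^2 u\|_{L^2}^{1/2}$, which needs $u\in H^2$; any alternative H\"older split $\|u\|_{L^q}\|\nabla^2 n\|_{L^p}$ with $\frac{1}{p}+\frac{1}{q}=\frac{1}{2}$ forces $q\le 6$ (all that $H^1$ embeds into), hence $p\ge 3$, and then \eqref{GN} shows $\|\nabla^2 n\|_{L^p}$ requires $2+3(\frac{1}{2}-\frac{1}{p})>2$ derivatives of $n$ in $L^2$, unavailable in $H^2$. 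The cubic term suffers the same obstruction: its extreme Leibniz piece $\nabla^{N+1}n\,\nabla n\,(n+w_0)$ needs $\|\nabla n\|_{L^\infty}$ (hence $n\in H^3$), while your blanket $L^4\times L^4\times L^\infty$ scheme would need $\|\nabla^{N+1}n\|_{L^4}$, which by \eqref{GN} corresponds to $N+1+\frac{3}{4}$ derivatives in $L^2$ and is therefore beyond $H^{N+1}$ for \emph{every} $N$ --- for $N\ge 2$ that slip is repairable by pairing $\|\nabla^{N+1}n\|_{L^2}\|\nabla n\|_{L^\infty}$ instead, but for $N=1$ no repair exists within the available regularity.

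The paper avoids ever demanding $\nabla^k F\in L^2$ pointwise: it runs the energy identity \eqref{232} for $\nabla^{l+1}n$ and estimates the right-hand side so that every top-order factor enters as $\delta\|\nabla^{l+2}n\|_{L^2}^2$ (or $\varepsilon\|\nabla^{l+2}n\|_{L^2}^2$) and is absorbed into the dissipation. This is visible in the separately treated case $N=1$, \eqref{382}--\eqref{385}, where the dangerous factor appears only as $\|\nabla n\|_{L^\infty}\lesssim\|\nabla^2 n\|_{L^2}^{1/2}\|\nabla^3 n\|_{L^2}^{1/2}$ with the half power of $\|\nabla^3 n\|_{L^2}$ fed into the absorbable dissipation via Young's inequality (legitimate there since $\nabla^3 n\in L^2$ for a.e.\ $t$ by \eqref{1.5}, even though it admits no pointwise decay bound), and in \eqref{386}--\eqref{388} for $N\ge2$. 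Only after this absorption does the paper apply the Fourier-splitting step \eqref{389}--\eqref{3811}, which is Lemma \ref{motivated}'s mechanism in disguise. To rescue your write-up, fix the extreme H\"older pairings for $N\ge2$, and for $N=1$ abandon the pointwise source bound altogether: fold the top-order terms into the dissipation inside the differential inequality \emph{before} Fourier splitting, as the paper does.
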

\begin{proof}
We will give the proof by taking the strategy of induction.
In fact, the inequality \eqref{director-decay2} implies the equality
\eqref{director-decay3} holds on for the case $k=1$. Now, we suppose the inequality
\eqref{director-decay3}  holds on for the case $k=l(l=1,...,N)$, i.e.,
\begin{equation}\label{381}
\|\nabla^{l} n\|_{L^2}^2 \le C (1+t)^{-\frac{3}{2}-l}.
\end{equation}
In the sequel, we focus on verifying \eqref{director-decay3} holds on for the case $k=l+1$.
For the case $N=1$, taking $k=1$ in \eqref{232}, then we have
\begin{equation}\label{382}
\!\frac{1}{2}\!\frac{d}{dt}\!\int\! |\nabla^2 n|^2 dx\!+\int |\nabla^3 n|^2 dx
=\int \!\nabla^2 \!\left[-u \cdot \nabla n\!+|\nabla n|^2(n+w_0)\right]\! \cdot \nabla^2 n \ dx
=I\!I_1+I\!I_2.
\end{equation}
It follows from integration by part, Holder inequality and \eqref{GN} that
\begin{equation}\label{383}
\begin{aligned}
I\!I_1
&\lesssim (\|\nabla n\|_{L^\infty}\|\nabla u\|_{L^2}+\|u\|_{L^3}\|\nabla^2 n\|_{L^6})\|\nabla^3 n\|_{L^2}\\
&\lesssim (\|\nabla^2 n\|_{L^2}^{\frac{1}{2}}\|\nabla^3 n\|_{L^2}^{\frac{1}{2}}\|\nabla u\|_{L^2}
           +\|u\|_{H^1}\|\nabla^3 n\|_{L^2})\|\nabla^3 n\|_{L^2}\\
&\lesssim \|\nabla^2 n\|_{L^2}^2\|\nabla u\|_{L^2}^2+\delta\|\nabla^3 n\|_{L^2}^2\\
&\lesssim (1+t)^{-5}+\delta\|\nabla^3 n\|_{L^2}^2.
\end{aligned}
\end{equation}
On the other hand, by virtue of \eqref{2329}, it arrives at immediately
\begin{equation}\label{384}
I\!I_2 \lesssim \delta  \|\nabla^3 n\|_{L^2}^2.
\end{equation}
Substituting \eqref{383} and \eqref{384} into \eqref{382}, we get
\begin{equation}\label{385}
\frac{d}{dt}\int |\nabla^2 n|^2 dx+\int |\nabla^3 n|^2 dx \lesssim (1+t)^{-5}.
\end{equation}
For the case $N \ge2$, then taking $k=l$ in \eqref{232}, then the first term on the right hand side of \eqref{232} can be estimated as follows
\begin{equation}\label{386}
\begin{aligned}
&-\int \nabla^{l+1}(u \cdot \nabla n)\nabla^{l+1}n dx\\
&\lesssim \sum_{m=1}^{l-1}\|\nabla^m u\|_{L^3}\|\nabla^{l+1-m}n\|_{L^6}\|\nabla^{l+2}n\|_{L^2}
          +\|u\|_{L^3}\|\nabla^{l+1}n\|_{L^6}\|\nabla^{l+2}n\|_{L^2}\\
&\quad \  +\|\nabla n\|_{L^\infty}\|\nabla^{l}u\|_{L^2}\|\nabla^{l+2}n\|_{L^2}\\
&\lesssim \sum_{m=1}^{l-1}\|\nabla^m u\|_{H^1}^2\|\nabla^{l+2-m}n\|_{L^2}^2
          +\|\nabla n\|_{H^2}^2\|\nabla^{l}u\|_{L^2}^2+(\varepsilon+\delta)\|\nabla^{l+2}n\|_{L^2}^2\\
&\lesssim \sum_{m=1}^{l-1}(1+t)^{-\frac{3}{2}-m}(1+t)^{-\frac{7}{2}-l+m}+(1+t)^{-4-l}
          +(\varepsilon+\delta)\|\nabla^{l+2}n\|_{L^2}^2\\
&\lesssim (1+t)^{-4-l} +(\varepsilon+\delta)\|\nabla^{l+2}n\|_{L^2}^2.\\
\end{aligned}
\end{equation}
On the other hand, by virtue of \eqref{2329}, it follows immediately
\begin{equation}\label{387}
\int \nabla^{l+1} (|\nabla n|^2 (n+w_0)) \cdot \nabla^{l+1} n \ dx \lesssim \delta \|\nabla^{l+2} n\|_{L^2}^2.
\end{equation}
Inserting \eqref{386} and \eqref{387} into \eqref{2329}, we get
\begin{equation}\label{388}
\frac{d}{dt}\int |\nabla^{l+1} n|^2 dx+\int |\nabla^{l+2} n|^2 dx\le C(1+t)^{-4-l}.
\end{equation}
which, together with \eqref{385}, implies \eqref{388} holds on for any $N \ge 1.$
Similar to \eqref{FSM}, we have
\begin{equation}\label{389}
\int |\nabla^{l+2} n|^2 dx \ge \frac{R}{1+t}\int_{\mathbb{R}^3} |\nabla^{l+1} n|^2 dx
-\frac{R^2}{(1+t)^2}\int_{\mathbb{R}^3} |\nabla^l n|^2 dx.
\end{equation}
Combining \eqref{388} and \eqref{389} and applying the decay rates \eqref{381}, it follows
\begin{equation}\label{3810}
\begin{aligned}
&\frac{d}{dt}\int |\nabla^{l+1} n|^2 dx+\frac{R}{1+t}\int_{\mathbb{R}^3} |\nabla^{l+1} n|^2 dx\\
&\le \frac{R^2}{(1+t)^2}\int_{\mathbb{R}^3} |\nabla^l n|^2 dx+C(1+t)^{-4-l}\\
&\le C(1+t)^{-\frac{7}{2}-l}.
\end{aligned}
\end{equation}
Choosing $R=l+3$ and multiplying \eqref{3810} by $(1+t)^{l+3}$, then we have
\begin{equation}\label{3811}
\frac{d}{dt}\left[(1+t)^{l+3}\|\nabla^{l+1} n\|_{L^2}^2\right]\le C(1+t)^{-\frac{1}{2}}.
\end{equation}
Integrating \eqref{3811} over $[0, t]$, it arrives at
\begin{equation*}
\|\nabla^{l+1} n\|_{L^2}^2 \le C(1+t)^{-\frac{5}{2}-l}.
\end{equation*}
Therefore, we complete the proof of lemma.
\end{proof}

\emph{\bf{Proof for Theorem \ref{rates1}:}}\ With the help of the Lemma \ref{director-Decay-Lp}, Lemma \ref{velocity-rates}
 and Lemma \ref{director-rates}, we complete the proof of Theorem \ref{rates1}.

\subsection{Decay rates for  pressure and space-time derivatives of velocity and direction field}

\quad In this section, we will establish the convergence rates for the pressure and the mixed space-time
derivatives of velocity and director.
\begin{lemm}\label{director-mixed}
Under the assumptions of Theorem \ref{rates1}, the director has the following time decay rates
\begin{equation*}
\|\nabla^{k} n_t(t)\|_{L^2}^2 \le C (1+t)^{-\frac{7}{2}-k},
\end{equation*}
for $k=0,1,2,....,N-1$.
\end{lemm}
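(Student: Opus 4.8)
The goal is to bound $\|\nabla^k n_t\|_{L^2}$ for $k=0,1,\dots,N-1$. The natural plan is to read the decay rate directly off the director equation $\eqref{equations2}_3$, which gives the pointwise identity
\begin{equation*}
n_t=\Delta n-u\cdot\nabla n+|\nabla n|^2(n+w_0).
\end{equation*}
Applying $\nabla^k$ and taking $L^2$ norms, I would estimate the three resulting terms separately. The linear principal term contributes $\|\nabla^{k+2}n\|_{L^2}$, which by Lemma \ref{director-rates} (specifically \eqref{director-decay3} with index $k+2\le N+1$) decays like $(1+t)^{-\frac{7}{4}-\frac{k}{2}}$, i.e. its square is $(1+t)^{-\frac{7}{2}-k}$. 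This is exactly the target rate, so the strategy is to show the two nonlinear terms decay at least as fast.

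First I would handle the transport term $\nabla^k(u\cdot\nabla n)$. Using the Leibniz formula I expand it into $\sum_{l=0}^k C_k^l\,\nabla^l u\,\nabla^{k+1-l}n$, and estimate each summand by Holder together with the interpolation inequality \eqref{GN}, splitting the range of $l$ at $[k/2]$ just as in the proofs of Lemma \ref{inequality2} and \eqref{386}. The point is that each factor is controlled by the already-established spatial decay rates: $\|\nabla^a u\|_{L^2}$ by Lemma \ref{velocity-rates} and $\|\nabla^b n\|_{L^2}$ by Lemma \ref{director-rates}, with the small-data bound \eqref{smallness} absorbing the lowest-order factors. Matching the exponents so that the product of the two decay rates gives at least $(1+t)^{-\frac{7}{4}-\frac{k}{2}}$ in $L^2$ (square $(1+t)^{-\frac{7}{2}-k}$) is the bookkeeping heart of this step; the exponents work out because $u$ and $\nabla n$ each carry the extra half-power of decay relative to $n$.

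Next I would treat the supercritical term $\nabla^k(|\nabla n|^2(n+w_0))$. Here I would again apply Leibniz, distributing the $k$ derivatives among the three factors, and use $|n+w_0|=1$ (see \eqref{equal1}) to keep the undifferentiated orientation factor bounded in $L^\infty$. Since every term contains at least two derivatives landing on $n$ (the quadratic gradient structure $|\nabla n|^2$), the fast spatial decay of $\nabla^j n$ for $j\ge 1$ from Lemma \ref{director-rates} makes this the best-decaying of the three contributions; a Holder split into $L^3,L^6$ norms converted via \eqref{GN} into $L^2$-based norms, exactly as in the estimates \eqref{2321}--\eqref{2328} for $I_4$, controls it and yields a rate no slower than the linear term.

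The main obstacle, as in the earlier lemmas, is the combinatorial exponent-matching in the nonlinear terms: for each allocation of derivatives one must choose the interpolation parameter $\theta$ in \eqref{GN} so that the differentiation count is correct and, simultaneously, the sum of the resulting time-decay exponents meets the threshold $\tfrac{7}{2}+k$. The transport term is the delicate one because the velocity decay rates from Lemma \ref{velocity-rates} only reach order $N$, so one must verify the index $k+1-l$ never exceeds what Lemma \ref{director-rates} covers and that the $l=0$ endpoint (all derivatives on $\nabla n$, giving $\|u\|_{L^\infty}$ or $\|u\|_{L^3}$ times $\|\nabla^{k+1}n\|$) is handled by placing a small-data factor $\delta$ or $\varepsilon$ on the top-order piece and absorbing it. Once all pieces are shown to decay at rate $(1+t)^{-\frac{7}{2}-k}$ or faster, summing gives the claimed bound and completes the proof.
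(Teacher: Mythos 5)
Your proposal is correct and follows essentially the same route as the paper: read $\nabla^k n_t$ off the director equation, bound the linear term by $\|\nabla^{k+2}n\|_{L^2}^2\lesssim(1+t)^{-\frac{7}{2}-k}$ via Lemma \ref{director-rates} (valid precisely because $k+2\le N+1$, which is why the lemma stops at $k=N-1$), and show via Leibniz, H\"older, Sobolev and the established rates in Lemmas \ref{velocity-rates} and \ref{director-rates} that the transport and supercritical terms decay strictly faster (the paper gets $(1+t)^{-5-k}$ and $(1+t)^{-6-k}$ respectively), with the $\varepsilon\|\nabla^k n_t\|_{L^2}^2$ piece absorbed by smallness. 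Your only deviations are cosmetic: you take $L^2$ norms by the triangle inequality where the paper pairs with $\nabla^k n_t$ and uses Young, and the delicate $[k/2]$-interpolation splitting you anticipate is not actually needed here since crude $H^1\hookrightarrow L^3$ bounds plus the decay rates already suffice.
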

\begin{proof}
Taking $k-$th spatial derivatives on both hand side of $\eqref{equations2}_3$, multiplying by $\nabla^k n_t$
and integrating over $\mathbb{R}^3$, then we have
\begin{equation}\label{391}
\|\nabla^k n_t\|_{L^2}^2
=\int \nabla^k (-u \cdot \nabla u+\Delta n+|\nabla n|^2(n+w_0))\nabla^k n_t dx.
\end{equation}
For the case $N=1$, then it follows immediately
\begin{equation}\label{392}
\begin{aligned}
\|n_t\|_{L^2}^2
&\le (\|\Delta n\|_{L^2}+\|u\|_{L^6}\|\nabla n\|_{L^3}+\|\nabla n\|_{L^3}\|\nabla n\|_{L^6})\|n_t\|_{L^2}\\
&\lesssim \|\Delta n\|_{L^2}^2+\|\nabla u\|_{L^2}^2\|\nabla n\|_{H^1}^2
          +\|\nabla n\|_{H^1}\|\nabla^2 n\|_{L^2}^2+\varepsilon\|n_t\|_{L^2}^2\\
&\lesssim (1+t)^{-\frac{7}{2}}+\varepsilon\|n_t\|_{L^2}^2.
\end{aligned}
\end{equation}
For the case $N\ge2,$ it follows from the Young inequality that
\begin{equation}\label{393}
\int \nabla^k \Delta n \nabla^k n_t
\lesssim \|\nabla^{k+2}n\|_{L^2}^2+\varepsilon\|\nabla^k n_t\|_{L^2}^2
\lesssim (1+t)^{-\frac{7}{2}-k}+\varepsilon\|\nabla^k n_t\|_{L^2}^2.
\end{equation}
By virtue of \eqref{1.7}, Holder and Sobolev inequalities, we get
\begin{equation}\label{394}
\begin{aligned}
&-\int \nabla^k(u\cdot \nabla n)\nabla^k n_t dx\\
&\lesssim \sum_{l=0}^k \|\nabla^l u\|_{H^1}^2\|\nabla^{k+2-l}n\|_{L^2}^2
          +\varepsilon\|\nabla^k n_t\|_{L^2}^2\\
&\lesssim \sum_{l=0}^k(1+t)^{-\frac{3}{2}-l}(1+t)^{-\frac{7}{2}-k+l}+\varepsilon\|\nabla^k n_t\|_{L^2}^2\\
&\lesssim (1+t)^{-5-k}+\varepsilon\|\nabla^k n_t\|_{L^2}^2.\\
\end{aligned}
\end{equation}
Similarly, we have
\begin{equation}\label{395}
\begin{aligned}
&\int \nabla^{k}(|\nabla n|^2(n+w_0))\nabla^k n_t dx\\
&\lesssim \sum_{l=0}^{k-1}\sum_{m=0}^l \|\nabla^{m+1}n\|_{L^6}\|\nabla^{l+1-m}n\|_{L^6}\|\nabla^{k-l}n\|_{L^6}
          \|\nabla^k n_t\|_{L^2}\\
&\quad  +\sum_{m=0}^{k} \|\nabla^{m+1}n\|_{L^3}\|\nabla^{k+1-m}n\|_{L^6}\|\nabla^k n_t\|_{L^2}\\
&\lesssim \sum_{l=0}^{k-1}\sum_{m=0}^l \|\nabla^{m+2}n\|_{L^2}^2\|\nabla^{l+2-m}n\|_{L^2}^2\|\nabla^{k+1-l}n\|_{L^2}^2\\
&\quad  +\sum_{m=0}^{k} \|\nabla^{m+1}n\|_{H^1}^2\|\nabla^{k+2-m}n\|_{L^2}^2+\varepsilon\|\nabla^k n_t\|_{L^2}^2\\
&\lesssim \sum_{l=0}^{k-1}\sum_{m=0}^l (1+t)^{-\frac{7}{2}-m}(1+t)^{-\frac{7}{2}-l+m}(1+t)^{-\frac{5}{2}-k+l}\\
&\quad  +\sum_{m=0}^{k}(1+t)^{-\frac{5}{2}-m}(1+t)^{-\frac{7}{2}-k+m}+\varepsilon\|\nabla^k n_t\|_{L^2}^2\\
&\lesssim (1+t)^{-6-k}+\varepsilon\|\nabla^k n_t\|_{L^2}^2.
\end{aligned}
\end{equation}
Substituting \eqref{393}, \eqref{394} and \eqref{395} into \eqref{391},then we have
\begin{equation}\label{396}
\|\nabla^k n_t\|_{L^2}^2 \lesssim (1+t)^{-\frac{7}{2}-k}+\varepsilon\|\nabla^k n_t\|_{L^2}^2.
\end{equation}
Combination of \eqref{392} and \eqref{396} and the smallness of $\varepsilon$, we complete the proof of lemma.
\end{proof}

Next, we will establish the time decay rates for the pressure and the mixed space-time derivatives of the velocity.
More precisely, we have
\begin{lemm}\label{velocity-mixed}
Under the assumptions of Theorem \ref{rates1}, the pressure and the velocity have the following
time decay rates
\begin{equation*}
\|\nabla^{k+1}P\|_{L^2}^2+\|\nabla^{k} u_t(t)\|_{L^2}^2 \le C (1+t)^{-\frac{7}{2}-k},
\end{equation*}
for $k=0,1,2,....,N-2$.
\end{lemm}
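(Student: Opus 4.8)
The strategy is to read off both quantities directly from the momentum equation $\eqref{equations2}_1$ and to reduce the whole statement to the spatial decay rates already proved in Lemma \ref{velocity-rates} and Lemma \ref{director-rates}, exactly as the preceding Lemma \ref{director-mixed} did for $n_t$. Solving $\eqref{equations2}_1$ for the time derivative gives $u_t=\Delta u-u\cdot\nabla u-\nabla P-{\rm div}(\nabla n\odot\nabla n)$, so the slowest term is the linear one: since $k\le N-2$ we have $k+2\le N$, and Lemma \ref{velocity-rates} yields $\|\nabla^{k+2}u\|_{L^2}^2\le C(1+t)^{-\frac72-k}$, which is precisely the target rate. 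The task is therefore to show that the pressure and the two nonlinear contributions decay at least this fast.

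For $u_t$ I would mirror Lemma \ref{director-mixed}: apply $\nabla^k$ to $\eqref{equations2}_1$, multiply by $\nabla^k u_t$ and integrate over $\mathbb{R}^3$, obtaining
\[
\|\nabla^k u_t\|_{L^2}^2=\int\nabla^k\big(\Delta u-u\cdot\nabla u-{\rm div}(\nabla n\odot\nabla n)\big)\cdot\nabla^k u_t\,dx-\int\nabla^{k+1}P\cdot\nabla^k u_t\,dx.
\]
The pressure integral vanishes: integrating by parts it equals $-\int\nabla^k P\,\nabla^k({\rm div}\,u_t)\,dx=0$ because ${\rm div}\,u_t=\partial_t({\rm div}\,u)=0$. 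Hence the pressure never enters the $u_t$ estimate. Bounding the remaining terms by Young's inequality, the $\Delta u$ term contributes $C\|\nabla^{k+2}u\|_{L^2}^2+\varepsilon\|\nabla^k u_t\|_{L^2}^2$, while the two nonlinear terms contribute faster-decaying quantities plus $\varepsilon\|\nabla^k u_t\|_{L^2}^2$; absorbing the $\varepsilon$-terms closes the estimate.

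The pressure is handled separately by elliptic regularity. Taking ${\rm div}$ of $\eqref{equations2}_1$ and using ${\rm div}\,u=0$ (which kills ${\rm div}\,u_t$ and ${\rm div}\,\Delta u$) gives $\Delta P=-\partial_i\partial_j(u_iu_j)-\partial_i\partial_j(\partial_i n\cdot\partial_j n)$, i.e. $P=-\Delta^{-1}\partial_i\partial_j\big(u_iu_j+\partial_i n\cdot\partial_j n\big)$. The operator $\Delta^{-1}\partial_i\partial_j$ is a product of Riesz transforms and bounded on $L^2$, so
\[
\|\nabla^{k+1}P\|_{L^2}\lesssim\|\nabla^{k+1}(u\otimes u)\|_{L^2}+\|\nabla^{k+1}(\nabla n\odot\nabla n)\|_{L^2},
\]
which are the same nonlinear norms already needed for $u_t$.

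It remains to estimate the nonlinear products, which is routine with the toolkit used throughout the paper. By the Leibniz rule $\nabla^{k+1}(u\otimes u)=\sum_{l=0}^{k+1}C_{k+1}^l\,\nabla^l u\,\nabla^{k+1-l}u$ and $\nabla^{k+1}(\nabla n\odot\nabla n)=\sum_{l=0}^{k+1}C_{k+1}^l\,\nabla^{l+1}n\,\nabla^{k+2-l}n$; estimating each summand by Hölder in the form $\|\cdot\|_{L^3}\|\cdot\|_{L^6}$, then the interpolation inequality \eqref{GN} together with Lemma \ref{velocity-rates} and Lemma \ref{director-rates}, one gets $\|\nabla^{k+1}(u\otimes u)\|_{L^2}^2\lesssim(1+t)^{-5-k}$ and $\|\nabla^{k+1}(\nabla n\odot\nabla n)\|_{L^2}^2\lesssim(1+t)^{-7-k}$, both strictly faster than $(1+t)^{-\frac72-k}$. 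I expect the only delicate point to be the derivative bookkeeping at the endpoint $k=N-2$: the worst summand of the director term forces $\|\nabla^{k+3}n\|_{L^2}=\|\nabla^{N+1}n\|_{L^2}$, which is available exactly because Lemma \ref{director-rates} supplies the enhanced rate up to order $N+1$, while the linear term needs $\|\nabla^{k+2}u\|_{L^2}$ with $k+2\le N$. Thus the constraint $0\le k\le N-2$ is precisely what makes every required norm admissible, and this matching of the derivative budget — rather than any individual inequality — is the real content of the argument.
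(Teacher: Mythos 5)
Your proposal is correct, and the $u_t$ half coincides with the paper's argument: the paper likewise applies $\nabla^k$ to $\eqref{equations2}_1$, tests with $\nabla^k u_t$, kills the pressure through exactly the orthogonality $-\int \nabla^{k+1}P\cdot\nabla^k u_t\,dx=0$ (integration by parts plus ${\rm div}\,u_t=0$), and bounds the nonlinearities by the same Leibniz--H\"older--Sobolev scheme, arriving at the same intermediate rates $(1+t)^{-5-k}$ for the convective term and $(1+t)^{-7-k}$ for the director stress. Where you genuinely diverge is the pressure. The paper recovers $\nabla^{k+1}P$ from the classical regularity theory for the stationary Stokes system, $\|\nabla^{k+2}u\|_{L^2}^2+\|\nabla^{k+1}P\|_{L^2}^2\lesssim\|\nabla^k(-u_t-u\cdot\nabla u-\nabla n\,\Delta n)\|_{L^2}^2$, so its pressure bound is limited by the $u_t$ term and lands exactly at $(1+t)^{-\frac72-k}$. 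You instead take the divergence of the momentum equation to get the pressure Poisson equation $\Delta P=-\partial_i\partial_j(u_iu_j+\partial_i n\cdot\partial_j n)$ and invoke $L^2$-boundedness of $\Delta^{-1}\partial_i\partial_j$ (Riesz transforms); this eliminates $u_t$ from the pressure estimate altogether and, as you note, actually yields the strictly better rate $\|\nabla^{k+1}P\|_{L^2}^2\lesssim(1+t)^{-5-k}$, driven purely by the nonlinearities. What the paper's route buys in exchange is a simultaneous bound on $\|\nabla^{k+2}u\|_{L^2}$ from the same inequality (redundant here, since Lemma \ref{velocity-rates} already provides it); what your route buys is independence of the pressure bound from the $u_t$ decay and a sharper rate. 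Your derivative bookkeeping at the endpoint $k=N-2$ is also accurate: the worst director summand indeed requires $\|\nabla^{N+1}n\|_{L^2}$, which is precisely what the enhanced Lemma \ref{director-rates} supplies, and this is the reason the paper proves that enhancement before this lemma.
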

\begin{proof}
Taking $k-th$ spatial derivatives on both hand side of $\eqref{equations2}_1$, multiplying by $\nabla^k u_t$
and integrating over $\mathbb{R}^3$, then we have
\begin{equation}\label{3101}
\int |\nabla^k u_t|^2 dx
=-\int \nabla^k(u\cdot \nabla u+\Delta u+\nabla P+\nabla n \Delta n)\nabla^k u_t dx.
\end{equation}
First of all, by virtue of \eqref{1.6}, Holder and Sobolev inequalities, we obtain
\begin{equation}\label{3102}
\begin{aligned}
&-\int \nabla^k(u\nabla u)\nabla^k u_t dx\\
&\lesssim \sum_{l=0}^k \|\nabla^l u\|_{L^3}\|\nabla^{k+1-l}u\|_{L^6}\|\nabla^k u_t\|_{L^2}\\
&\lesssim \sum_{l=0}^k \|\nabla^l u\|_{H^1}^2\|\nabla^{k+2-l}u\|_{L^2}^2+\varepsilon\|\nabla^k u_t\|_{L^2}^2\\
&\lesssim \sum_{l=0}^k (1+t)^{-\frac{3}{2}-l}(1+t)^{-\frac{7}{2}-k+l}+\varepsilon\|\nabla^k u_t\|_{L^2}^2\\
&\lesssim (1+t)^{-5-k}+\varepsilon\|\nabla^k u_t\|_{L^2}^2.\\
\end{aligned}
\end{equation}
Similarly, we have
\begin{equation}\label{3103}
\int \nabla^k \Delta u \nabla^k u_t
\lesssim \|\nabla^{k+2}u\|_{L^2}^2+\varepsilon\|\nabla^k u_t\|_{L^2}^2
\lesssim (1+t)^{-\frac{7}{2}-k}+\varepsilon\|\nabla^k u_t\|_{L^2}^2.
\end{equation}
and
\begin{equation}\label{3104}
\begin{aligned}
&-\int \nabla^k(\nabla n\Delta n)\nabla^k u_tdx\\
&\lesssim \sum_{l=0}^k \|\nabla^{l+1}n\|_{L^3}\|\nabla^{k+2-l}n\|_{L^6}\|\nabla^k u_t\|_{L^2}\\
&\lesssim \sum_{l=0}^k \|\nabla^{l+1}n\|_{H^1}^2\|\nabla^{k+3-l}n\|_{L^2}^2+\varepsilon\|\nabla^k u_t\|_{L^2}^2\\
&\lesssim (1+t)^{-\frac{5}{2}-l}(1+t)^{-\frac{9}{2}-k+l}+\varepsilon\|\nabla^k u_t\|_{L^2}^2\\
&\lesssim (1+t)^{-7-k}+\varepsilon\|\nabla^k u_t\|_{L^2}^2.\\
\end{aligned}
\end{equation}
Integrating by part and applying $\eqref{equations2}_2$, it arrives at
\begin{equation}\label{3105}
-\int \nabla^{k+1}P\nabla^k u_t dx=0.
\end{equation}
Substituting \eqref{3102}-\eqref{3105} into \eqref{3101} and choosing $\varepsilon$ small enough, we get
\begin{equation}\label{3106}
\|\nabla^k u_t\|_{L^2}^2 \le C(1+t)^{-\frac{7}{2}-k}.
\end{equation}
It follows from the classical regularity of Stokes equations that
\begin{equation}\label{3107}
\|\nabla^{k+2} u\|_{L^2}^2+\|\nabla^{k+1} P\|_{L^2}^2 \lesssim \|\nabla^k (-u_t-u\cdot \nabla u-\nabla n \Delta n)\|_{L^2}^2.
\end{equation}
On the other hand, by virtue of \eqref{3102}, \eqref{3104} and \eqref{3106}, we have
\begin{equation}\label{3108}
\|\nabla^k (-u_t-u\cdot \nabla u-\nabla n \Delta n)\|_{L^2}^2 \lesssim (1+t)^{-\frac{7}{2}-k}.
\end{equation}
Plugging \eqref{3108} into \eqref{3107} gives
\begin{equation*}
\|\nabla^{k+1} P\|_{L^2}^2  \lesssim (1+t)^{-\frac{7}{2}-k},
\end{equation*}
which, together with \eqref{3106}, completes the proof of the lemma.
\end{proof}

\emph{\bf{Proof for Theorem \ref{rates2}:}}\ With the help of the Lemma \ref{director-mixed}
and Lemma \ref{velocity-mixed}, we complete the proof of Theorem \ref{rates2}.

\section*{Acknowledgements}
Qiang Tao's research was supported by the NSF(Grant No.11171060) and the NSF(Grant No.11301345).
Zheng-an Yao's research was supported in part by NNSFC(Grant No.11271381) and
China 973 Program(Grant No. 2011CB808002).

\phantomsection
\addcontentsline{toc}{section}{\refname}

\end{document}